\newtheorem{theorem}{Theorem}
\newtheorem{lemma}[theorem]{Lemma}
\newtheorem{corollary}[theorem]{Corollary}
\newtheorem{proposition}[theorem]{Proposition}
\newtheorem{definition}[theorem]{Definition}
\newtheorem{question}[theorem]{Question}
\newtheorem{example}[theorem]{Example}
\newtheorem{convention}[theorem]{Convention}
\numberwithin{equation}{section}
\numberwithin{theorem}{section}
\newcommand{\N}{\mathbb{N}}
\newcommand{\Z}{\mathbb{Z}}
\mathchardef\mhyphen="2D
\begin{document}

\begin{center}
\texttt{Comments, corrections,
and related references welcomed, as always!}\\[.5em]
{\TeX}ed \today
\\[.5em]
\vspace{2em}
\end{center}

\title%
[Prime semigroups and groups]%
{On semigroups that are prime in the sense of Tarski, and\\
groups prime in the senses of Tarski and of Rhodes}
\thanks{%
Archived at \url{http://arxiv.org/abs/2409.15541}\,.
Readable at \url{http://math.berkeley.edu/~gbergman/papers/}.\\
After publication, any updates, errata, related references,
etc., found will be noted at
\url{http://math.berkeley.edu/~gbergman/papers/abstracts}\,.\\
}

\subjclass[2020]{Primary: 20D40, 20M10.
Secondary: 08A10, 18A32, 20E34.
}
\keywords{semigroups prime in the sense of Tarski;
groups prime in senses of Tarski and of Rhodes}

\author{George M.\ Bergman}
\address{Department of Mathematics\\
University of California\\
Berkeley, CA 94720-3840, USA}
\email{gbergman@math.berkeley.edu}

\begin{abstract}
If $\mathcal{C}$ is a category of algebras closed under finite
direct products, and $M_\mathcal{C}$ the commutative monoid of
isomorphism classes of members of $\mathcal{C},$ with operation
induced by
direct product, A.\,Tarski defined a nonidentity element $p$ of
$M_\mathcal{C}$ to be {\em prime} if, whenever it divides a
product of two elements, it divides one of them,
and defined an object of $\mathcal{C}$ to be
prime if its isomorphism class has this property.

McKenzie, McNulty and Taylor \cite[p.\,263]{MMT} ask whether
the category of nonempty semigroups has any objects that
are prime in this sense.
We show in \S\ref{S.Main} that it does not.
However, for the category of monoids, and some other subcategories
of semigroups, we obtain examples of prime objects
in~\S\S\ref{S.prep}-\ref{S.N_etc}.
In \S\ref{S.more_MMT_Qs} two related questions
from~\cite{MMT}, open so far as I know, are recalled.

In~\S\ref{S.egg}, which can be read independently of the rest
of this note, we recall two conditions called primeness by
semigroup theorists, and obtain results and examples on the
relationships among those
conditions and Tarski's in categories of groups.
\S\ref{S.Rprime&var} notes a characterization of one
of those conditions on finite algebras in an arbitrary variety.

Several questions are raised.
\end{abstract}
\maketitle
\vspace{-.5em}
\section{Note on terminology}\label{S.termin}

As noted in the abstract, Tarski's definition of
``prime'' is different from a pair of
senses currently common in semigroup theory.
In~\S\ref{S.egg} below, the latter two conditions will be recalled,
and their relationship with each other and with Tarski's
on groups examined.
Until that section, {\em prime algebra} will be
understood in Tarski's sense, as indicated
in Definition~\ref{D.gen}(ii) below.

\section{The category of nonempty
semigroups has no prime objects}\label{S.Main}

Here are two general usages that we will follow
in \S\S\ref{S.Main}-\ref{S.more_MMT_Qs}:

\begin{definition}\label{D.gen}
{\rm(i)} As in \cite{MMT}, ``semigroup'' will be
understood to mean ``nonempty semigroup''.
The category of all such semigroups will be denoted $\mathbf{Semigp}.$

{\rm(ii)} Also as in~\cite{MMT} {\rm(}p.\,263, top paragraph{\rm)},
if $\mathcal{C}$ is a category admitting finite direct products
{\rm(}e.g., $\mathbf{Semigp}),$
an object $X$ of $\mathcal{C}$ {\em other than} the final object
{\rm(}the product of the empty family, corresponding to the
identity element of the monoid of isomorphism classes; in
varieties of algebras, the $\!1\!$-element algebra{\rm)}
will be called {\em prime} if,
whenever an object $Y$ of $\mathcal{C}$ admits $X$ as a direct factor,
and $Y = Y_0\times Y_1$ is another direct product decomposition of $Y,$
then one of $Y_0,$ $Y_1$ admits $X$ as a direct factor.
\end{definition}

Here are some further bits of language and notation that will
be used in the present section.

\begin{definition}\label{D.null}
{\rm(i)} By a {\em null semigroup} we shall mean a semigroup
in which all pairs of elements have the same product.
For any cardinal $\kappa,$ the null semigroup of cardinality
$\kappa\!+\!1$ \textup{(}i.e., having exactly $\kappa$ elements
which are not products, in addition to the one which is\textup{)}
will be denoted $\textup{Null}(\kappa).$

{\rm(ii)} Two elements $x$ and $x'$ of a semigroup $S$
will be called {\em action equivalent} if for all $y\in S,$
$x\,y=x'\,y$ and $y\,x=y\,x'.$
\textup{(}Clearly, this is an equivalence relation on $S.)$

{\rm(iii)} An element $x$ of a semigroup $S$ will be called
a {\em product element} if it can be written $x=y\,z$ for $y,z\in S,$
and a {\em non-product element} otherwise.

{\rm(iv)} Given a semigroup $S,$ a {\em skeleton} $S'$ of $S$ will
mean any subsemigroup of $S$ which consists of all the product
elements, and exactly one representative of each
action equivalence class that contains no product elements.
\end{definition}

The definition of ``skeleton'' may look strange; a more natural
characterization is that it is a minimal subsemigroup that
contains at least one element from every action equivalence class.
This guarantees that it contains every product element $x\,y,$
since given $x$ and $y,$ it will contain
some $x'$ action-equivalent to $x$ and
some $y'$ action-equivalent to $y,$ and
hence will contain their product, $x'y'=xy'=xy.$
But since descending chains of nonempty sets can have
empty intersection, the existence of such minimal subsemigroups
is not self-evident, while the existence of $S'$ as in~(iv)
above is.

Clearly,
\begin{equation}\begin{minipage}[c]{35pc}\label{d.skel}
For $S$ a semigroup, all skeleta of $S$ are isomorphic to one another.
\end{minipage}\end{equation}

We also observe that
\begin{equation}\begin{minipage}[c]{35pc}\label{d.null_iff}
A semigroup $S$ is null if and only if it has one and only one
action equivalence class.
\end{minipage}\end{equation}
Indeed, $\Rightarrow$ is clear.
Conversely, if all elements are action
equivalent, then for any $x,\,x',y,\,y'\in S$ we have
$x'y'=x\,y'=x\,y,$ showing that all products are equal.

It is also easy to see that if $S$ and $T$ are semigroups, then
\begin{equation}\begin{minipage}[c]{35pc}\label{d.equiv_in_SxT}
Elements $(s,t),~(s',t')\in S\times T$ are action equivalent
if and only if $s,s'\in S$ are action equivalent and
$t,t'\in T$ are action equivalent.
\end{minipage}\end{equation}
\begin{equation}\begin{minipage}[c]{35pc}\label{d.prod_in_SxT}
An element $(s,t)\in S\times T$ is a product element
if and only if $s\in S$ is a product element and
$t\in T$ is a product element.
\end{minipage}\end{equation}

The next lemma gives lots of isomorphisms among products
of semigroups, which will
be a tool in proving the nonexistence of prime semigroups.

\begin{lemma}\label{L.SxNull}
If $S$ and $T$ are semigroups having isomorphic skeleta, and
$\kappa$ is an infinite cardinal greater than or equal
to the cardinality of every action equivalence class in $S$ and of
every action equivalence class in $T,$ then
$S\times\textup{Null}(\kappa) \cong T\times\textup{Null}(\kappa).$

Moreover, every action equivalence class in that product semigroup
has cardinality $\kappa,$ and contains $\kappa$ non-product elements.
\end{lemma}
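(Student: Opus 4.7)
The plan is to reduce the isomorphism $S \times \textup{Null}(\kappa) \cong T \times \textup{Null}(\kappa)$ to the given isomorphism of skeleta by proving the intermediate statement
\[S \times \textup{Null}(\kappa) \;\cong\; S' \times \textup{Null}(\kappa)\]
for any semigroup $S$ satisfying the class-size hypothesis; the analogous assertion for $T,$ together with the hypothesized $S' \cong T',$ then composes to give the conclusion. Write $N = \textup{Null}(\kappa)$ and let $\mathbf{0}$ denote its unique product element.

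First I would dispose of the ``moreover'' statement. By~\eqref{d.null_iff}, $N$ has a single action equivalence class, so~\eqref{d.equiv_in_SxT} tells us that the action classes of $S \times N$ are exactly the sets $C \times N$ as $C$ ranges over the action classes of $S.$ By~\eqref{d.prod_in_SxT}, the product elements of $S \times N$ are precisely $P_S \times \{\mathbf{0}\},$ where $P_S \subseteq S$ denotes the set of product elements. Since $1 \le |C| \le \kappa$ and $|N| = \kappa,$ a direct cardinality count yields $|C \times N| = \kappa,$ and the non-product part $(C \times (N \setminus \{\mathbf{0}\})) \cup ((C \setminus P_S) \times \{\mathbf{0}\})$ also has cardinality $\kappa.$

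The heart of the argument is the observation that multiplication in $S \times N$ takes the form $(s_1,n_1)(s_2,n_2) = (s_1 s_2, \mathbf{0}),$ and this product depends on $s_1,s_2$ only through their action-equivalence classes (immediate from the definition of action equivalence). Consequently, to exhibit a semigroup isomorphism $\alpha\colon S \times N \to S' \times N,$ it suffices to produce any bijection satisfying (a) $\alpha$ is the identity on $P_S \times \{\mathbf{0}\}$ (noting $P_{S'} = P_S,$ since $S'$ contains every product of $S$), and (b) $\alpha$ sends each class $C \times N$ bijectively onto $(C \cap S') \times N.$ Writing $\alpha(s_i,n_i) = (\sigma_i,\nu_i)$ with $\sigma_i \in C \cap S'$ action-equivalent to $s_i$ in $S,$ action equivalence gives $\sigma_1 \sigma_2 = s_1 s_2,$ so $\alpha$ matches multiplication automatically. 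The class-by-class construction then reduces to checking $|C \times N \setminus (P_S \times \{\mathbf{0}\})| = |(C \cap S') \times N \setminus (P_S \times \{\mathbf{0}\})| = \kappa,$ which is a routine count.

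The main subtlety I anticipate is confirming that the action-equivalence relation of the subsemigroup $S'$ coincides with the restriction to $S'$ of the relation on $S$---a prerequisite for identifying the action classes of $S' \times N$ with the sets $(C \cap S') \times N.$ This will use the defining property that $S'$ contains a representative of every action class of $S,$ so that any tester $y \in S$ can be replaced by an action-equivalent $y' \in S'$ in the equations $\sigma_1 y = \sigma_2 y$ and $y\sigma_1 = y\sigma_2.$ Once this point is settled, the proof is a clean piece of bookkeeping resting on the one structural insight that $S \times N$ retains from $S$ only the information present in its products and action classes.
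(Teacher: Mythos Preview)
Your proof is correct and follows essentially the same approach as the paper's: both rest on the observation that multiplication in $S\times\textup{Null}(\kappa)$ is determined by action-equivalence classes, so an isomorphism can be built by matching product elements exactly and bijecting the $\kappa$ non-product elements in each corresponding action class. The only organizational difference is that the paper assumes without loss of generality that $S$ and $T$ share a common skeleton and constructs the isomorphism $S\times N\to T\times N$ directly, whereas you factor through $S\times N\cong S'\times N$ and $T\times N\cong T'\times N$; your route requires the extra (easy) check that action equivalence in $S'$ agrees with the restriction from $S$, which you correctly identify and handle.
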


\begin{proof}
We can assume without loss of generality that
$S$ and $T$ have a common skeleton; hence, in particular, that
their sets of product elements are the same.
So, in view of~\eqref{d.prod_in_SxT},
$S\times\textup{Null}(\kappa)$ and $T\times\textup{Null}(\kappa)$
have the same product elements.

Since the common skeleton of $S$ and $T$ contains at least one
representative of each action equivalence class, and since the
behavior of the semigroup operation on an element is
determined by which action equivalence class it belongs to,
we can get an isomorphism between
$S\times\textup{Null}(\kappa)$ and $T\times\textup{Null}(\kappa)$
if, for each action equivalence class of the common skeleton
of $S$ and $T,$ we can
define a bijection between the sets of {\em non-product} elements of
the corresponding action equivalence classes of
$S\times\textup{Null}(\kappa)$ and $T\times\textup{Null}(\kappa).$

But each of those non-product sets has cardinality $\kappa.$
E.g., in $S\times\textup{Null}(\kappa),$ such a set
has cardinality {\em at most} $\kappa$ because it is contained in
the direct product of a $\leq\kappa\!$-element action
equivalence class of $S$ and the $\kappa\!+\!1\!$-element
semigroup $\textup{Null}(\kappa),$
and since $\kappa$ is infinite, $\kappa(\kappa\!+\!1)=\kappa;$
while it has {\em at least} $\kappa$
elements occurring as pairs $(s,\alpha),$ where $s$ is any
member of the given equivalence class of $S,$ and
$\alpha$ ranges over the $\kappa$ non-product elements of
$\textup{Null}(\kappa).$
Hence the desired bijections can be chosen.
\end{proof}

We can now prove a large class of cases of our desired result:

\begin{lemma}\label{L.non-null}
No non-null semigroup is prime in $\mathbf{Semigp}.$
\end{lemma}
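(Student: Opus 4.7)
The plan is to use Lemma~\ref{L.SxNull} to re-express $Y = X\times\textup{Null}(\kappa)$ (for a suitably large infinite $\kappa$) as $T\times\textup{Null}(\kappa)$ for a different semigroup $T,$ and then to take $Y_0=T,$ $Y_1=\textup{Null}(\kappa).$ Since $\textup{Null}(\kappa)$ is null and $X$ is not, $\textup{Null}(\kappa)$ cannot admit the non-null $X$ as a direct factor; it therefore suffices to exhibit $T$ with the following properties: $T$ has skeleton isomorphic to that of $X,$ all its action-equivalence classes are of cardinality at most $\kappa,$ and $T$ itself does not admit $X$ as a direct factor. Given such $T,$ Lemma~\ref{L.SxNull} provides $X\times\textup{Null}(\kappa)\cong T\times\textup{Null}(\kappa),$ and we are done.

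The construction of $T$ naturally splits in two. First, suppose the skeleton $X'$ of $X$ is a proper subset of $X.$ Then take $T=X'.$ By~\eqref{d.skel} its skeleton is isomorphic to that of $X,$ and its action equivalence classes are the intersections with $X'$ of those of $X,$ hence no larger than those of $X.$ Moreover any isomorphism $X'\cong X\times W$ would force $|X'|\ge |X|,$ contradicting $|X'|<|X|$ when $X$ is finite; the same cardinality obstruction suffices in the infinite setting whenever $|X'|<|X|$ strictly. The remaining possibility within this first case is $|X'|=|X|$ with $X'\subsetneq X,$ and a direct inspection of the definition of the skeleton rules this out: $|X'|=|X|$ forces every pure-non-product class of $X$ to be a singleton and every product-containing class of $X$ to consist only of products, which in turn forces $X'=X$.

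The second case is thus $X'=X,$ i.e., every action equivalence class of $X$ is either a singleton pure-non-product class or else all-product. Here I set $T=X\sqcup\{b\}$ for a new symbol $b,$ with the unique semigroup structure on $T$ extending that of $X$ and in which $b$ is declared action-equivalent to some chosen $x_0\in X.$ (Routine checking shows such an operation is well-defined and associative.) This $T$ has the same product elements and the same action classes as $X,$ except that the class $[x_0]$ gains the element $b;$ in particular $T$ has skeleton isomorphic to that of $X.$

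The main obstacle is showing that for a good choice of $x_0$ the semigroup $T$ does not admit $X$ as a direct factor. Suppose $T\cong X\times W.$ Since $T$ and $X$ have the same number of action classes, \eqref{d.equiv_in_SxT} forces $W$ to have a unique action class, and \eqref{d.prod_in_SxT} then forces $W$ to have a unique product element, so that $W=\textup{Null}(\kappa')$ for some $\kappa'.$ Matching the multiset of class sizes yields $|C|(\kappa'+1)=|C|$ for each $X\!$-class $C\neq[x_0]$ and $|[x_0]|(\kappa'+1)=|[x_0]|+1.$ For $X$ finite, the second equation already has no solution in a nontrivial cardinal $\kappa'+1$ when $|[x_0]|\ge 2,$ while the choice $|[x_0]|=1$ forces $\kappa'=1$ and is then ruled out by any other singleton class via the first equation (or by a simple divisibility count $|X|\nmid|X|+1$). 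For $X$ infinite, the same system of equations is incompatible provided one chooses $x_0$ in a class whose size cannot simultaneously satisfy both constraints; e.g.\ one takes $x_0$ in a finite class of $X$ when any exists, and treats separately the residual configuration in which $X$ has essentially a single class-size profile. The routine but somewhat fiddly case analysis of this last point is where I expect the bulk of the work to lie.
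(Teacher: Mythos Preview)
Your overall strategy coincides with the paper's: use Lemma~\ref{L.SxNull} to rewrite $X\times\textup{Null}(\kappa)$ as $T\times\textup{Null}(\kappa)$ for some $T$ that does not admit $X$ as a direct factor. The difference is in the choice of~$T$, and your choices run into real trouble when $X$ is infinite.

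In Case~1 you assert that ``$|X'|=|X|$ with $X'\subsetneq X$'' is impossible. This is false for infinite $X$. Take $X=\{x_{i,j}:i\in\N,\ j\in\{0,1\}\}\cup\{y_i:i\in\N\}\cup\{z\}$ with $x_{i,j}x_{i,j'}=y_i$ and all other products equal to~$z$. The classes $\{x_{i,0},x_{i,1}\}$ are pure-non-product of size~$2$, so $X'\subsetneq X$, yet $|X'|=|X|=\aleph_0$. (Your conclusion that $T=X'$ works is in fact salvageable here, but by a different argument: since $X'\subsetneq X$, the semigroup $X$ has a pure-non-product class of size $\geq 2$, while every pure-non-product class of the skeleton $X'$ is a singleton; so if $X'\cong X\times W$, any class of the form $A\times C$ with $A$ that size-$\geq 2$ pure-non-product class of $X$ would be a pure-non-product class of $X'$ of size $\geq 2$, a contradiction. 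This is not the argument you gave.)

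In Case~2 your claim that ``$T$ and $X$ have the same number of action classes, so $W$ has a unique action class'' fails whenever $X$ has infinitely many classes: if $X$ has $\aleph_0$ classes then $X\times W$ has $\aleph_0$ classes for any $W$ with $\leq\aleph_0$ classes. The subsequent equations for class sizes likewise do not pin down $W$ in the infinite case, and the ``fiddly case analysis'' you defer is not obviously completable along these lines.

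The paper sidesteps all of this with a single uniform construction: choose $\kappa$ infinite and strictly larger than every action-equivalence class of $X$, and let $T=X^+$ be $X$ with $\kappa$ new elements adjoined to one chosen class. Then $X^+$ has \emph{exactly one} class of cardinality~$\kappa$. If $X^+\cong X\times W$, then since no class of $X$ has size~$\kappa$, some class of $W$ must; but $X$, being non-null, has at least two classes, so $X\times W$ would then have at least two classes of size~$\geq\kappa$, contradicting the construction of~$X^+$. Adding $\kappa$ elements rather than one (or removing some) is the key idea that makes the obstruction robust for arbitrary infinite~$X$.
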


\begin{proof}
Let $S$ be a non-null semigroup.
Take an infinite cardinal $\kappa$ larger than the
cardinality of any action equivalence class of $S,$
and form a semigroup $S^+\supset S$ by attaching to one
arbitrarily chosen
action equivalence class, $A\subset S,$ $\kappa$ additional elements.
(I.e., add $\kappa$ additional elements to the underlying set of $S,$
and define multiplication on the resulting set so that all products
involving those elements are the same as the products one
gets by putting members of $A$ in their place.
In particular, the product of two of the added elements will
be the same as the product of any two members of $A.)$

By Lemma~\ref{L.SxNull}, we have
\begin{equation}\begin{minipage}[c]{35pc}\label{d.Sxkappa}
$S\times\textup{Null}(\kappa)~\cong~S^+\times\textup{Null}(\kappa).$
\end{minipage}\end{equation}

If $S$ were prime, it would be isomorphic to a direct factor of one
of the factor semigroups on the right-hand side of~\eqref{d.Sxkappa}.
That factor cannot be $\textup{Null}(\kappa),$ because $S,$
being non-null, has more than one action equivalence class.
On the other hand, if it were $S^+,$ so that we could write
\begin{equation}\begin{minipage}[c]{35pc}\label{d.S^+=}
$S^+~\cong~S\times T,$
\end{minipage}\end{equation}
then since $S$ has no action equivalence classes of cardinality
$\kappa,$ but $S^+$ does, the factor $T$ must have one; but the
products of that class with two different action
equivalence classes in $S$ would give two
action equivalence classes in $S^+$ both of cardinality $\kappa,$
contradicting our construction of $S^+.$
So $S$ is not prime.
\end{proof}

To show that semigroups $\textup{Null}(\kappa)$
are also non-prime, we need
a criterion for when a semigroup can be written
as a direct product with $\textup{Null}(\kappa)$ as one of the factors.

\begin{lemma}\label{L.null-kappa_x}
Let $\kappa$ be a cardinal \textup{(}finite or
infinite\textup{)}, and $S$ a semigroup.
Then the following two conditions are equivalent:

{\rm(i)} There exists a semigroup $S_0$ such that
$S~\cong~S_0\times\textup{Null}(\kappa).$

{\rm(ii)} Each action equivalence class $A$ of $S$ can
be written as a disjoint union, $A=A_0\cup A_1,$ such that
$\textup{card}(A_1)=\kappa~\textup{card}(A_0),$ and
all {\em product} elements of $S$ lying in $A$ belong to $A_0.$

Moreover, if $S$ and $\kappa$ are finite,~{\rm(ii)} can be rephrased:

{\rm(ii$\!'\!$)} Each action equivalence class $A$ of $S$ has
cardinality $\kappa\!+\!1$ times an integer greater than or equal
to the number of product elements in~$A.$
\end{lemma}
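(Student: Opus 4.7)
My plan is to prove the equivalence of (i) and (ii) by exploiting the fact, implicit in the definition of action equivalence, that the semigroup operation in any semigroup $S$ depends only on the action equivalence classes of the operands; in particular, for each ordered pair of classes of $S$ their product is a single well-defined element, which is necessarily a product element. The direction (i)$\Rightarrow$(ii) follows almost immediately from~\eqref{d.equiv_in_SxT} and~\eqref{d.prod_in_SxT}; the direction (ii)$\Rightarrow$(i) will require constructing $S_0$ from the $A_0$'s and then exhibiting an isomorphism; and (ii)$\Leftrightarrow$(ii$'$) in the finite case will be elementary arithmetic. The main work will be in (ii)$\Rightarrow$(i), specifically checking that the bijection I construct between $S_0 \times \textup{Null}(\kappa)$ and $S$ respects multiplication.

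For (i)$\Rightarrow$(ii), let $t_0 \in \textup{Null}(\kappa)$ denote its unique product element. By~\eqref{d.null_iff}, $\textup{Null}(\kappa)$ has just one action equivalence class, so by~\eqref{d.equiv_in_SxT} each action equivalence class $A$ of $S_0 \times \textup{Null}(\kappa)$ has the form $B \times \textup{Null}(\kappa)$ for an action equivalence class $B$ of $S_0$. By~\eqref{d.prod_in_SxT}, the product elements of the ambient semigroup lying in $A$ are exactly the pairs $(s,t_0)$ with $s$ a product element of $S_0$, so they all lie in $B \times \{t_0\}$. Setting $A_0 = B \times \{t_0\}$ and $A_1 = B \times (\textup{Null}(\kappa) \setminus \{t_0\})$ then gives $|A_1| = \kappa\,|A_0|$, as required.

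For (ii)$\Rightarrow$(i), I take $S_0 \subseteq S$ to be the union of the chosen $A_0$'s. Since every product element of $S$ lies in some $A_0$ by hypothesis, and the $S$-product of two elements of $S_0$ is in particular a product element of $S$, the set $S_0$ is closed under multiplication and so is a subsemigroup. To construct the desired isomorphism $\phi: S_0 \times \textup{Null}(\kappa) \to S$, I set $\phi(a, t_0) = a$ for each $a \in S_0$, and for each action equivalence class $A$ of $S$ I choose any bijection from $A_0 \times (\textup{Null}(\kappa) \setminus \{t_0\})$ onto $A_1$---which exists because both sides have cardinality $\kappa\,|A_0|$---to specify $\phi$ on the remaining elements. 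Then $\phi$ is a bijection, and for any $(a,t),(a',t') \in S_0 \times \textup{Null}(\kappa)$, the element $\phi(a,t)$ lies in the action equivalence class of $a$ (and similarly for the primed version), so their $S$-product equals the $S$-product $a\,a'$, which in turn equals $\phi(a\,a',\,t_0) = \phi((a,t)(a',t'))$ since $t\,t' = t_0$ in $\textup{Null}(\kappa)$; hence $\phi$ is an isomorphism.

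Finally, for (ii)$\Leftrightarrow$(ii$'$) in the finite case: condition (ii) yields $|A| = (\kappa+1)|A_0|$ with $|A_0|$ at least the number of product elements of $A$, which is precisely the content of (ii$'$); conversely, given the cardinality condition of (ii$'$), one picks an $A_0 \subseteq A$ of the prescribed size that contains all product elements of $A$, and lets $A_1 = A \setminus A_0$.
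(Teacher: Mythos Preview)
Your proof is correct and follows essentially the same approach as the paper's: for (i)$\Rightarrow$(ii) you both split each action equivalence class according to the $\textup{Null}(\kappa)$-coordinate, and for (ii)$\Rightarrow$(i) you both take $S_0$ to be the union of the $A_0$'s. The paper merely asserts that the resulting $S_0$ ``is easily seen to be a subsemigroup of $S$ such that $S\cong S_0\times\textup{Null}(\kappa)$'', whereas you actually write down the isomorphism and verify it using the key observation that multiplication in $S$ depends only on action equivalence classes; so your version is more explicit but not different in substance.
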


\begin{proof}
If~(i) holds, let us assume without loss of generality
that $S=S_0\times\textup{Null}(\kappa).$
Note that for each $s\in S_0,$ the $\kappa\!+\!1$
elements $(s,x)$ $(x\in\textup{Null}(\kappa))$ are
action equivalent, and at most one (the one with $x$
the product element of $\textup{Null}(\kappa))$
can be a product element, so taking the union of these families
over the elements $x$ of an action equivalence class of $S_0,$
and writing $A_0$ for the set of $(s,x)$ in this union with $x$ the
product element of $\textup{Null}(\kappa),$ and $A_1$
for the set of $(s,x)$ with non-product $x,$
we get a decomposition of the induced action equivalence
class of~$S$ as in~(ii).

Conversely, if we have a decomposition of each action equivalence
class of $S$ as in~(ii), let $S_0$ be the union
over these classes $A$ of their chosen subsets $A_0.$
This is easily seen to be a subsemigroup of $S$ such that
$S\cong S_0\times\textup{Null}(\kappa).$

When $S$ and $\kappa$ are finite,
the equivalence of~(ii) and~(ii$\!')$ is straightforward.
\end{proof}

As promised, we can now prove

\begin{lemma}\label{L.null_not_prime}
No semigroup $\textup{Null}(\kappa)$ is prime in $\mathbf{Semigp}.$
\end{lemma}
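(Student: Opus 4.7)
The plan is to invoke the criterion in Lemma~\ref{L.null-kappa_x} to exhibit semigroups $Y_0$ and $Y_1$ with $\textup{Null}(\kappa) \mid Y_0 \times Y_1$ yet $\textup{Null}(\kappa) \nmid Y_0$ and $\textup{Null}(\kappa) \nmid Y_1$. The construction splits into the cases of finite and infinite $\kappa$.

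For finite $\kappa = n \geq 1$, I would take $Y_0 = \textup{Null}(n+1)$, whose unique action equivalence class has $n+2$ elements --- not a multiple of $n+1$, so criterion~(ii$'\!$) fails and $\textup{Null}(n) \nmid Y_0$. For $Y_1$ I would construct a $2(n+1)$-element semigroup with two action equivalence classes $A$ and $D$ of size $n+1$ each, by fixing distinct $a_1, a_2 \in A$ and setting $xy = a_1$ whenever $x \in A$ or $y \in A$, and $xy = a_2$ when both $x, y \in D$. Associativity is immediate because every product lies in $A$, making $(xy)z = x(yz) = a_1$ for every triple. The class $A$ of $Y_1$ contains $2$ product elements out of $n+1$, so~(ii$'\!$) would require $m \geq 2$ with $n+1 = (n+1)m$, which is impossible; hence $\textup{Null}(n) \nmid Y_1$. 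But in $Y_0 \times Y_1$ both classes have cardinality $(n+1)(n+2)$ with at most $2$ product elements, satisfying~(ii$'\!$) with $m = n+2 \geq 2$.

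For infinite $\kappa$ the choice $Y_0 = \textup{Null}(n+1)$ no longer works, since $\textup{Null}(\kappa+1) \cong \textup{Null}(\kappa)$. Instead I would take $Y_0$ to be a semigroup with one action equivalence class $A$ of cardinality $\kappa$ all of whose elements are product elements, together with $\kappa$ further action equivalence classes $D_i$ of cardinality $\kappa$ containing no products; this is arranged by fixing $a_0 \in A$, declaring $xy = a_0$ whenever $x \in A$ or $y \in A$, and choosing the values of $xy \in A$ for $x \in D_i$, $y \in D_j$ to range surjectively over $A$ (possible since $\kappa \cdot \kappa = \kappa$). Associativity holds as before. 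Since $A$ contains no non-product element, the forced choice $A_0 = A$, $A_1 = \emptyset$ gives $|A_1| = 0 \neq \kappa = \kappa\,|A_0|$, so $\textup{Null}(\kappa) \nmid Y_0$. Taking $Y_1 = \textup{Null}(1)$, whose unique class has size $2 < \kappa$ and admits no valid partition, yields $\textup{Null}(\kappa) \nmid Y_1$. In $Y_0 \times Y_1$, however, the class $A \times B$ (with $B$ the class of $Y_1$) has $A \times \{0\}$ of size $\kappa$ as products and $A \times \{1\}$ of size $\kappa$ as non-products, giving a valid partition satisfying $|A_1| = \kappa\,|A_0|$; each class $D_i \times B$ contains no products and admits the partition with $A_0$ a singleton and $A_1$ of size $\kappa$.

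The main obstacle is the infinite case, where the divisibility condition in Lemma~\ref{L.null-kappa_x}(ii) is not merely a cardinality condition on action equivalence classes but also a subset condition: each class must contain enough non-product elements for the partition $|A_1| = \kappa\,|A_0|$ to be realized by genuine subsets. The construction of $Y_0$ is designed precisely to fail this subset condition on its class $A$ (which has no non-products at all), while $Y_0 \times Y_1$ satisfies it thanks to the non-product of $\textup{Null}(1)$ contributed in every class.
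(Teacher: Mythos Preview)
Your approach is essentially the same as the paper's: in both the finite and infinite cases you build a semigroup having an action-equivalence class whose elements are all (or too many of them) products --- so that Lemma~\ref{L.null-kappa_x}(ii) fails --- and then multiply by a small $\textup{Null}(n)$ to inject enough non-product elements into every class so that the criterion is restored. Your finite-$\kappa$ semigroup $Y_1$ is nearly identical to the paper's $S$ in~\eqref{d.xiyiz}--\eqref{d.x-eq-cl-fin}, and your infinite-$\kappa$ $Y_0$ plays the same role as the paper's $S$ in~\eqref{d.xij}.

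Two gaps. First, you omit $\kappa=0$; this is trivial (the one-element semigroup is excluded from primeness by definition), but it should be said. Second, and more substantively, your infinite-case $Y_0$ is underspecified. You \emph{declare} that $A$ and the $D_i$ are the action-equivalence classes and then define the multiplication, but ``choosing the values of $xy\in A$ for $x\in D_i,\ y\in D_j$ to range surjectively over $A$'' does not by itself force the resulting class structure to be the declared one. If the product depends on the individual elements $x,y$ rather than only on $(i,j)$, the sets $D_i$ may split into smaller classes (and then the corresponding classes of $Y_0\times Y_1$ could be too small for criterion~(ii) to hold, wrecking the $\textup{Null}(\kappa)\mid Y_0\times Y_1$ direction). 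Even if the product depends only on $(i,j)$, a careless choice --- say one with some $i_0$ for which every product involving $D_{i_0}$ equals $a_0$ --- would merge $D_{i_0}$ into $A$, giving that class $\kappa$ non-product elements and making criterion~(ii) \emph{hold} for it, which would destroy your argument that $\textup{Null}(\kappa)\nmid Y_0$. The paper avoids this by writing down an explicit multiplication (see~\eqref{d.xij}) and checking the class structure directly. You should do the same: for instance, let the product of anything in $D_i$ with anything in $D_j$ be the $i$-th element of $A$ under a fixed bijection $\kappa\leftrightarrow A$, and then verify that $A$ really is a single class disjoint from the $D_i$'s and that each $D_i$ has cardinality~$\kappa$.
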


\begin{proof}
For each $\kappa,$ we shall construct a semigroup $S$
which does {\em not} satisfy Lemma~\ref{L.null-kappa_x}(ii), and
a positive integer $n$ such that $\textup{Null}(n)$ also
does not satisfy that condition,
but such that $S\times\textup{Null}(n)$ does; so in
view of Lemma~\ref{L.null-kappa_x}\,(i)\!$\iff$\!(ii), this product
is a counterexample to primeness of $\textup{Null}(\kappa).$

If $\kappa$ is infinite, let $S$ consist of elements
$x_{ij}$ $(i\in\kappa,~j\in\kappa\!+\!1),$ $y_i$ $(i\in\kappa)$
and $z,$ with multiplication given by
\begin{equation}\begin{minipage}[c]{35pc}\label{d.xij}
$x_{ij}\,x_{ij'}~=~y_i$ for $i\in\kappa,~j,j'\in\kappa\!+\!1,$ while
all products not of this form have value~$z.$
\end{minipage}\end{equation}
Since all $\!3\!$-fold products equal~$z,$ $S$ is associative.
Note that the action equivalence classes of $S$ are of two
sorts: on the one hand, the sets
\begin{equation}\begin{minipage}[c]{35pc}\label{d.x-eq-cl}
$\{x_{ij}\,|\,j\in\kappa\!+\!1\},$ one for each $i\in\kappa,$
\end{minipage}\end{equation}
and one more equivalence class, comprising the remaining elements:
\begin{equation}\begin{minipage}[c]{35pc}\label{d.y,z-eq-cl}
$\{y_i\,|\,i\in\kappa\}\,\cup\,\{z\}.$
\end{minipage}\end{equation}
Of these, the classes~\eqref{d.x-eq-cl} clearly satisfy
the condition of Lemma~\ref{L.null-kappa_x}(ii), with $A_0$ the
singleton $\{x_{i0}\},$ and $A_1$ consisting of the remaining $\kappa$
elements, but the class~\eqref{d.y,z-eq-cl} does not,
since all of its elements are products.

Now take any positive integer $n$ (e.g., $n=1).$
Since $\kappa$ is infinite, the finite semigroup $\textup{Null}(n)$
cannot have $\textup{Null}(\kappa)$ as a direct factor.
But in the product semigroup $S\times\textup{Null}(n),$
we have effectively adjoined $n\,\kappa=\kappa=\kappa^2$ non-product
elements $(s,i)$ $(i\in n)$ to each of the $\!\kappa\!$-element
action equivalence classes~\eqref{d.x-eq-cl} and~\eqref{d.y,z-eq-cl}
of $S;$
so the product semigroup satisfies Lemma~\ref{L.null-kappa_x}(ii),
hence has $\textup{Null}(\kappa)$ as a direct factor,
giving the desired example.

Finally, suppose $\kappa$ is finite.

If $\kappa=0,$ then $\textup{Null}(\kappa)$ is the $\!1\!$-element
semigroup, corresponding to the identity element of the monoid of
isomorphism classes, which by definition is not prime.

If $\kappa>0,$ let $S$ be the semigroup of $2(\kappa\!+\!1)$ elements,
\begin{equation}\begin{minipage}[c]{35pc}\label{d.xiyiz}
$x_i$ $(i\in\kappa\!+\!1),$ $y_i$ $(i\in\kappa),$ \,and\, $z,$
\end{minipage}\end{equation}
with multiplication
\begin{equation}\begin{minipage}[c]{35pc}\label{d.x-eq-cl-fin}
$x_i\,x_{i'}~=~y_0,\,$ and all other products equal to\, $z.$
\end{minipage}\end{equation}

Here there are two action equivalence classes, each of
cardinality $\kappa\!+\!1,$ namely $\{x_i\}$ and $\{y_i\}\,\cup\,\{z\}.$
Analogously to the infinite case, we see that
the first satisfies Lemma~\ref{L.null-kappa_x}(ii\!$'$\!),
but the second does not, in this case because it has two
product elements, $y_0$ and $z,$ so that any decomposition as in
Lemma~\ref{L.null-kappa_x}(ii\!$'$\!) would require it to have at least
$2(\kappa\!+\!1)$ elements, while it has only $\kappa\!+\!1.$
So $\textup{Null}(\kappa)$ is not a direct factor of~$S.$

However, if we take the direct product of $S$ with any semigroup
$\textup{Null}(n)$ $(n>0),$ the total number of elements of that
action equivalence class will be multiplied by $n+1,$ while the number
of product elements will not be changed, and we see that
condition~(ii$\!')$ then holds; so $S\times\textup{Null}(n)$
has $\textup{Null}(\kappa)$ as a direct factor.
Moreover, if we choose that $n$ so that $n\!+\!1$ is not a multiple
of $\kappa\!+\!1,$ then $\textup{Null}(n)$ cannot itself have
$\textup{Null}(\kappa)$ as a direct factor, completing the
proof of non-primeness.
\end{proof}

Combining Lemmas~\ref{L.non-null} and~\ref{L.null_not_prime}, we get

\begin{theorem}\label{T.no_primes}
The category $\mathbf{Semigp}$ of \textup{(}nonempty\textup{)}
semigroups has no prime objects.\qed
\end{theorem}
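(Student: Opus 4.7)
The plan is to combine Lemmas~\ref{L.non-null} and~\ref{L.null_not_prime}, which together already cover every semigroup up to a trivial dichotomy, so the ``proof'' is essentially a one-paragraph case split.

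First I would observe, using~\eqref{d.null_iff}, that every semigroup $S$ either has more than one action equivalence class (and is thus non-null) or has exactly one (and is thus null). This partitions $\mathbf{Semigp}$ into the two classes addressed by the preceding lemmas.

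In the non-null case, Lemma~\ref{L.non-null} immediately gives that $S$ is not prime. In the null case, I would note that a null semigroup on a set of cardinality $\kappa+1$ is, up to isomorphism, exactly $\textup{Null}(\kappa)$: all products take the common value of any single product, so there is at most one product element, and the remaining $\kappa$ elements are non-products, with multiplication determined. Hence $S\cong\textup{Null}(\kappa)$ for some cardinal $\kappa$, and Lemma~\ref{L.null_not_prime} then says that $S$ is not prime.

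Since every semigroup falls into one of these two cases, no object of $\mathbf{Semigp}$ is prime. There is essentially no obstacle: the substantive work has all been carried out in the two lemmas, and what remains is only the structural observation that ``null'' is the same thing as ``isomorphic to some $\textup{Null}(\kappa)$,'' which is immediate from Definition~\ref{D.null}(i) together with~\eqref{d.null_iff}.
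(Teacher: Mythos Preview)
Your proposal is correct and follows exactly the paper's approach: the paper's proof is literally the one line ``Combining Lemmas~\ref{L.non-null} and~\ref{L.null_not_prime}, we get'' followed by \qed. Your added observation that every null semigroup is isomorphic to some $\textup{Null}(\kappa)$ is a harmless (and correct) elaboration of what the paper leaves implicit.
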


Note that in proving that arbitrary semigroups are not
prime, the auxiliary semigroups that we used -- mainly of
the form $\textup{Null}(\kappa),$ but also a few of other
sorts, e.g., the $S$ in the proof of Lemma~\ref{L.null_not_prime}
-- were all commutative, and had a zero element (an element
$z$ satisfying $zx=xz=z$ for all $x).$

Hence the same reasoning shows that there are no prime
objects in the category of all {\em commutative} semigroups,
or in the category of all semigroups with zero element.
More generally,

\begin{corollary}[to proof]\label{C.cm}
Let $C$ be any full subcategory of $\mathbf{Semigp}$
which is closed under taking pairwise direct products and
direct factors, and contains all semigroups $S$ that have a zero
element $z\in S$ such
that $s_1\,s_2\,s_3=z$ for all $s_1,\,s_2,\,s_3\in S.$

Then $S$ has no prime objects.\qed
\end{corollary}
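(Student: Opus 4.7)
The plan is to verify that every auxiliary semigroup that appeared in the proofs of Lemmas~\ref{L.non-null} and~\ref{L.null_not_prime} belongs to the class that $C$ is assumed to contain, namely semigroups possessing a zero element $z$ in which every three-fold product equals $z$. Once that is confirmed, the closure of $C$ under pairwise direct products and direct factors will let the arguments of those two lemmas be rerun inside $C$ with no change.

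First I would enumerate the auxiliary semigroups that occur: (a) the null semigroups $\textup{Null}(\kappa)$, in which the unique product element acts as a zero and all three-fold products are that element trivially; (b) the semigroup defined by~\eqref{d.xij} for infinite $\kappa$, where the remark immediately after~\eqref{d.xij} already records that every three-fold product equals $z$, from which one easily checks that $z$ is in fact a zero; and (c) the finite semigroup defined by~\eqref{d.x-eq-cl-fin}, for which an analogous short computation shows both that $z$ is a zero and that every three-fold product equals $z$. Each is therefore a member of $C$ by hypothesis.

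The one remaining auxiliary construction, the enlargement $S^+$ used in Lemma~\ref{L.non-null}, need not be verified independently: it appears only as a factor in an alternative decomposition of $S \times \textup{Null}(\kappa)$, and since that product lies in $C$ by closure under direct products, $S^+$ is forced into $C$ by closure under direct factors. With all the auxiliary objects safely inside $C$, the proofs of Lemmas~\ref{L.non-null} and~\ref{L.null_not_prime} transcribe verbatim: for any non-null $S \in C$ the isomorphism~\eqref{d.Sxkappa} exhibits a counterexample to primeness within $C$, and for any $\textup{Null}(\kappa) \in C$ the product $S \times \textup{Null}(n)$ built from the auxiliary semigroup in (b) or (c) does likewise. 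The main (and only) obstacle is the bookkeeping of which auxiliary semigroups demand direct membership verification and which are supplied for free by the closure hypotheses; once that is untangled, no further semigroup-theoretic work is needed, and we conclude that $C$ has no prime objects.
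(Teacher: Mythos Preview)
Your proposal is correct and is precisely the argument the paper intends: the corollary is stated with a \qed\ and is justified only by the paragraph preceding it, which observes that all auxiliary semigroups used in Lemmas~\ref{L.non-null} and~\ref{L.null_not_prime} have a zero element with every three-fold product equal to that zero. You have simply made explicit the bookkeeping the paper leaves implicit, including the nice observation that $S^+$ need not be checked directly since closure under direct factors forces it into~$C$.
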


The above result does not, however, apply with $C$ taken to be the
category of all {\em finite} semigroups, since the
condition that $C$ contain all semigroups $S$ with zero element
equal to all three-fold products is not limited to finite $S.$
Indeed, the proof of
Lemma~\ref{L.non-null} used semigroups $\textup{Null}(\kappa)$ in
a way that required $\kappa$ to be infinite even when the
given semigroup $S$ was finite.
Perhaps there is some variant argument that would use only
finite semigroups in such cases.
Not knowing whether there is, I ask

\begin{question}\label{Q.prime_in_fin}
Does the category of {\em finite} semigroups have any prime objects?
\end{question}

\section{Preparation for some positive results}\label{S.prep}

I obtained the results of the preceding section after a long
grueling attempt to prove that the additive semigroup of
positive integers was prime in $\mathbf{Semigp}.$
Since the argument I eventually found showing that this was not
true makes strong use of action-equivalent elements, it seemed plausible
that positive results might still hold in the category of semigroups
with cancellation.

And, indeed, the arguments I was trying to use do work for
such semigroups,
and can be adapted to several weaker and variant hypotheses.

Some of the preliminary results we will develop
to help us show this are true in more general contexts
than those in which we will use them.
In particular, though we will eventually use
the next lemma with $G$ the additive group of
integers, the proof of the lemma does not require $G$ to be
commutative, so I will use multiplicative notation till we
specialize further.

The referee indicated that %
that lemma was probably known.
The one published result I am now aware of
that implies it is~\cite[Lemma~3.1]{H+R}, where the authors
say it holds by ``easy computation'' but give no further details..
The proof below, suggested by Benjamin Steinberg,
is neater and shorter than my original version.

Recall that an {\em ideal} of a semigroup $S$ means
a nonempty subset $A\subseteq S$ which is closed under
left and right multiplication by elements of~$S.$

\begin{lemma}\label{L.pi_extends}
Let $S$ be a semigroup, $A$ an ideal of $S,$ $G$ a group, and
\begin{equation}\begin{minipage}[c]{35pc}\label{d.pi}
$\pi: A\,\to\,G$
\end{minipage}\end{equation}
a semigroup homomorphism.
Then $\pi$ extends uniquely to a homomorphism $\hat{\pi}: S\to G.$
Indeed, for any $s\in S$ and $x\in A,$ $\hat{\pi}(s)$ can be
described both as $\pi(sx)\,\pi(x)^{-1}$ and as $\pi(x)^{-1}\pi(xs).$
\end{lemma}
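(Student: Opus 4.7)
The plan is to define $\hat\pi(s) = \pi(sx)\pi(x)^{-1}$ for any chosen $x \in A$ (such an $x$ exists since $A$ is nonempty, and $sx \in A$ since $A$ is an ideal), then show this is independent of the choice of $x$, extends $\pi$, is a homomorphism, and is the only possible extension. The symmetric formula $\pi(x)^{-1}\pi(xs)$ will drop out of the same key identity.

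The crux is a single calculation. For $s \in S$ and $x, y \in A$, the element $xsy$ lies in $A$ (since $xs, sy \in A$ by the ideal condition), so we may expand $\pi(xsy)$ in two ways using that $\pi$ is a homomorphism on $A$:
\[
\pi(xs)\,\pi(y)\;=\;\pi((xs)\,y)\;=\;\pi(xsy)\;=\;\pi(x\,(sy))\;=\;\pi(x)\,\pi(sy).
\]
Rearranging in the group $G$ yields $\pi(sy)\,\pi(y)^{-1} = \pi(x)^{-1}\pi(xs)$. The left side is a priori independent of $x$ and the right side of $y$, so in fact each side is independent of both variables, and the two sides agree. This simultaneously shows that $\pi(sx)\pi(x)^{-1}$ does not depend on the choice of $x \in A$, and that it coincides with $\pi(x)^{-1}\pi(xs)$. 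Hence the two formulas in the statement define one and the same element of $G$, which we take as $\hat\pi(s)$.

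The remaining verifications are short. For $a \in A$, $\hat\pi(a) = \pi(ax)\pi(x)^{-1} = \pi(a)\pi(x)\pi(x)^{-1} = \pi(a)$, so $\hat\pi$ extends $\pi$. To see $\hat\pi(st) = \hat\pi(s)\hat\pi(t)$, fix $x \in A$; then $tx \in A$, and applying the formula for $\hat\pi(s)$ with $tx$ in place of $x$ gives $\pi(s\,(tx)) = \hat\pi(s)\,\pi(tx)$, whence
\[
\hat\pi(st)\;=\;\pi(stx)\,\pi(x)^{-1}\;=\;\hat\pi(s)\,\pi(tx)\,\pi(x)^{-1}\;=\;\hat\pi(s)\,\hat\pi(t).
\]
Uniqueness is automatic: any homomorphic extension $\hat\pi'$ must satisfy $\hat\pi'(s)\,\pi(x) = \hat\pi'(sx) = \pi(sx)$, forcing $\hat\pi'(s) = \pi(sx)\pi(x)^{-1} = \hat\pi(s)$.

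The main obstacle is spotting the right three-term product to which associativity should be applied; once one thinks to compute $\pi(xsy)$, everything else is mechanical. The invertibility of $G$ is essential — without it, the identity $\pi(xs)\pi(y) = \pi(x)\pi(sy)$ would not rearrange into a usable formula for $\hat\pi(s)$.
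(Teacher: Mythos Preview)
Your proof is correct and follows essentially the same approach as the paper's: both hinge on factoring a three-term product with $s$ sandwiched between elements of $A$ (you use $xsy$, the paper uses $xsx$ via the symmetric definition $\hat\pi(s)=\pi(x)^{-1}\pi(xsx)\pi(x)^{-1}$). Your organization makes the independence of the choice of $x$ explicit at the outset, whereas the paper leaves it to follow from uniqueness; otherwise the arguments are the same.
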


\begin{proof}
Choose any $x\in A,$ and define
\begin{equation}\begin{minipage}[c]{35pc}\label{d.hatpi=}
$\hat{\pi}(s)~=~\pi(x)^{-1}\,\pi(x\,s\,x)~\pi(x)^{-1}$ \ for $s\in S.$
\end{minipage}\end{equation}
Since the $x\,s\,x$ factors in $A$ as $x\,s\cdot x$ and $x\cdot s\,x,$
and $\pi$ is a homomorphism on $A,$ we get
corresponding factorizations of $\pi(x\,s\,x).$
Applying these to the right-hand side of~\eqref{d.hatpi=}, and
cancelling the resulting adjacent occurrences of
$\pi(x)$ and $\pi(x)^{-1},$ we get
\begin{equation}\begin{minipage}[c]{35pc}\label{d.hatpi==}
$\hat{\pi}(s)~=~\pi(x)^{-1}\pi(x\,s)~=~\pi(s\,x)\,\pi(x)^{-1}.$
\end{minipage}\end{equation}

To show that $\hat{\pi}:\,S\to G$ is a homomorphism,
let $s,\,t\in S,$ put $s\,t$ in place of $s$ in~\eqref{d.hatpi=},
note that $x\,s\,t\,x=(x\,s)(t\,x)$ with both factors
in $A,$ yielding a factorization of the middle term of
the right-hand side of that case of~\eqref{d.hatpi=}.
Simplifying the result using~\eqref{d.hatpi==}, we get
\begin{equation}\begin{minipage}[c]{35pc}\label{d.pi_xs}
$\hat{\pi}(s\,t)~=~\hat{\pi}(s)\,\hat{\pi}(t)$ \ for all $s,\,t\in A.$
\end{minipage}\end{equation}

It is clear from~\eqref{d.hatpi==} that $\hat{\pi},$ restricted
to~$A,$ gives~$\pi,$ and is the unique homomorphism with this property.
\end{proof}

(The analog of the above lemma with the group $G$ replaced by
a semigroup, or even a monoid, is not true.
For instance, for any $n\geq 1,$ let
$S$ be the additive semigroup of all positive integers,
$A$ the subsemigroup of integers $>n,$
$G$ the additive monoid $A\cup\{0\},$ and
$\pi: A\to G$ the inclusion map.
Likewise, the analog of the lemma with the assumption that $A$ be an
ideal of $S$ weakened to say that it is a left or right ideal fails.
For instance, if $S$ is the free semigroup on two
elements $s$ and $t,$ and we take for $A$ the left ideal
generated by $t,$ then we find that $A$ is free as a
semigroup on the set $\{s^i\,t\,|\,i\geq 0\},$
so for any nontrivial group $G$ there exist homomorphisms
$\pi:A\to G$ acting arbitrarily on these generators.
But such a homomorphism not satisfying
the relations $\pi(s^i\,t)\,\pi(t)^{-1}\pi(s^jt)=\pi(s^{i+j}t)$
for all natural numbers $i$ and $j$ cannot be extended to~$S.)$

\begin{corollary}\label{C.pi_on_prod}
Let $(S_i)_{i\in I}$ be a family of semigroups.

For each $i\in I,$ let $S'_i = S_i \cup \{e\},$ the monoid obtained
by adjoining an identity element to $S.$
Then for any group $G,$ every
semigroup homomorphism $\pi: \prod_I S_i\to G$ has a unique
extension to a monoid homomorphism $\hat{\pi}: \prod_I S'_i\to G.$
\end{corollary}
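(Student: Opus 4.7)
The plan is to deduce the corollary from Lemma~\ref{L.pi_extends} by observing that $P = \prod_I S_i$ is an ideal of the monoid $P' = \prod_I S'_i$. The only content beyond applying the lemma is checking the identity element is handled correctly.

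First I would verify that $P$ is indeed an ideal of $P'$. It is nonempty, since each $S_i$ is nonempty (invoking the axiom of choice if $I$ is infinite). Given $(s_i) \in P$ and $(t_i) \in P'$, in each coordinate $i$ either $t_i = e$, so $s_i t_i = s_i \in S_i$, or $t_i \in S_i$, so $s_i t_i \in S_i$ by closure of $S_i$ under its own operation; the left multiplication case is symmetric. Hence the componentwise product $(s_i t_i)$ lies in $P$, showing $P$ is closed under both left and right multiplication by elements of $P'$.

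Then by Lemma~\ref{L.pi_extends}, the semigroup homomorphism $\pi : P \to G$ extends uniquely to a semigroup homomorphism $\hat{\pi} : P' \to G$. It remains to check that $\hat{\pi}$ is a monoid homomorphism, i.e., that $\hat{\pi}$ sends the identity element $(e)_{i \in I}$ of $P'$ to $1_G$. But $(e)_{i \in I}$ is idempotent in $P'$, so its image under $\hat{\pi}$ is idempotent in $G$; since $G$ is a group, its only idempotent is $1_G$, giving what we need. Uniqueness of the monoid extension is immediate from uniqueness of the semigroup extension provided by the lemma.

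There is no real obstacle. The one thing to be careful about is that ``ideal'' means a nonempty subsemigroup closed under multiplication from both sides by elements of the ambient semigroup; the nonemptiness is what invisibly uses the axiom of choice on the product, and once noted, the rest is a transparent application of the lemma, with the group hypothesis on $G$ doing double duty (in the lemma itself and in forcing the adjoined identities to behave).
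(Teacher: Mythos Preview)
Your proof is correct and follows the same approach as the paper: show that $\prod_I S_i$ is an ideal of $\prod_I S'_i$ and invoke Lemma~\ref{L.pi_extends}. Your additional verification that $\hat{\pi}$ carries the identity of $\prod_I S'_i$ to $1_G$ (via the idempotent argument) is a detail the paper leaves implicit, so your write-up is in fact slightly more complete.
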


\begin{proof}
Letting $S = \prod_I S'_i$ and $A=\prod_I S_i,$
it is easy to check that $A$ is an ideal of $S.$
Lemma~\ref{L.pi_extends} now gives the desired conclusion.
\end{proof}

\begin{corollary}\label{C.pi_SxT}
If $S$ and $T$ are semigroups, $G$ a group,
and $\pi: S\times T\to G$ a semigroup homomorphism, then there exist
unique semigroup homomorphisms $\pi_S: S\to G$ and $\pi_T: T\to G$
such that the given homomorphism $\pi$ is described by
\begin{equation}\begin{minipage}[c]{35pc}\label{d.pi(s,t)}
$\pi(s,t)~=~\pi_S(s)~\pi_T(t)$ $(s\in S,\,\,t\in T).$
\end{minipage}\end{equation}
Moreover, $\pi_S(S)$ and $\pi_T(T)$ centralize one another in $G,$
\end{corollary}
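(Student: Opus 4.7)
The plan is to invoke Corollary~\ref{C.pi_on_prod} applied to the two-element family $(S,T)$, obtaining a unique monoid homomorphism $\hat{\pi}: S'\times T' \to G$ extending $\pi$, where $S' = S\cup\{e\}$ and $T' = T\cup\{e\}$. I would then define
$$\pi_S(s) \;=\; \hat{\pi}(s,e), \qquad \pi_T(t) \;=\; \hat{\pi}(e,t).$$
These are semigroup homomorphisms because they are the restrictions of the monoid homomorphism $\hat{\pi}$ to the subsemigroups $S\times\{e\}\cong S$ and $\{e\}\times T\cong T$ of $S'\times T'$.

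For the factorization, since $(s,t) = (s,e)\,(e,t)$ in $S'\times T'$, applying $\hat{\pi}$ yields $\pi(s,t) = \hat{\pi}(s,e)\,\hat{\pi}(e,t) = \pi_S(s)\,\pi_T(t)$. The centralization claim falls out for free from $(s,e)(e,t) = (e,t)(s,e)$, which forces $\pi_S(s)\,\pi_T(t) = \pi_T(t)\,\pi_S(s)$ for every $s\in S$, $t\in T$.

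For uniqueness, suppose $\pi_S(s)\,\pi_T(t) = \pi'_S(s)\,\pi'_T(t)$ for all $s,t$, with $\pi'_S,\pi'_T$ another pair of semigroup homomorphisms satisfying the factorization. Rearranging gives $\pi'_S(s)^{-1}\pi_S(s) = \pi'_T(t)\,\pi_T(t)^{-1}$; the left side is independent of $t$ and the right of $s$, so both equal a common constant $c\in G$. In particular $\pi'_T(t) = c\,\pi_T(t)$. Imposing that $\pi'_T$ is itself a semigroup homomorphism, we compute
$$c\,\pi_T(tt') \;=\; \pi'_T(tt') \;=\; \pi'_T(t)\,\pi'_T(t') \;=\; c\,\pi_T(t)\,c\,\pi_T(t'),$$
and cancellation in the group $G$ forces $c = 1_G$, hence $\pi'_T = \pi_T$ and $\pi'_S = \pi_S$.

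The substantive content is essentially packaged into Lemma~\ref{L.pi_extends} (via Corollary~\ref{C.pi_on_prod}); the only point requiring thought is the uniqueness, since one might initially fear a ``gauge'' freedom $(\pi_S,\pi_T)\mapsto(\pi_S c^{-1},\,c\,\pi_T)$ indexed by $c\in G$. The homomorphism constraint on $\pi'_T$ eliminates this freedom, and it does so precisely because $G$ is a group rather than merely a monoid, allowing the cancellation that forces $c = 1_G$.
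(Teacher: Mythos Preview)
Your argument is correct and, for the existence and centralization parts, matches the paper's proof essentially verbatim: both apply Corollary~\ref{C.pi_on_prod} to the pair $(S,T)$, set $\pi_S(s)=\hat\pi(s,e)$ and $\pi_T(t)=\hat\pi(e,t)$, and read off~\eqref{d.pi(s,t)} and the centralization from $(s,e)(e,t)=(e,t)(s,e)$.

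The only difference is in the uniqueness argument. The paper observes that any pair $\pi'_S,\pi'_T$ satisfying the factorization extends (by $e\mapsto e_G$) to monoid homomorphisms $S'\to G$ and $T'\to G$, whose product gives a monoid homomorphism $S'\times T'\to G$ extending $\pi$; the uniqueness clause of Corollary~\ref{C.pi_on_prod} then forces this to coincide with $\hat\pi$, hence $\pi'_S=\pi_S$ and $\pi'_T=\pi_T$. Your approach is instead a self-contained group computation: you isolate the ``gauge'' constant $c=\pi'_S(s)^{-1}\pi_S(s)=\pi'_T(t)\pi_T(t)^{-1}$ and kill it via the homomorphism identity $c\,\pi_T(tt')=c\,\pi_T(t)\,c\,\pi_T(t')$. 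Both are valid; the paper's route is slightly more conceptual (it recycles the earlier uniqueness), while yours is more explicit and makes transparent exactly where the group hypothesis on $G$ is used.
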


\begin{proof}
Apply Corollary~\ref{C.pi_on_prod} with
$I=\{0,1\},$ $S_0 = S,$ $S_1 = T,$
and for $s\in S,$ $t\in T,$ define $\pi_S(s) = \hat{\pi}(s,e)$ and
$\pi_T(t) = \hat{\pi}(e,t).$
From the fact that $\hat{\pi}$ respects
multiplication,~\eqref{d.pi(s,t)} and the mutual centralization
assertion both follow.
Uniqueness can be seen by noting that any $\pi_S$ and $\pi_T$
with the indicated properties extend to
homomorphisms $S\cup\{e\}\to G$ and $T\cup\{e\}\to G,$ and calling on
the uniqueness condition of Corollary~\ref{C.pi_on_prod}.
\end{proof}

(Remark: The corresponding statement with ``{\em monoids}'' in place of
``{\em semigroups}'' is trivial -- just define $\pi_S(s)=\pi(s,e)$
and $\pi_T(t)=\pi(e,t);$ and this even works with $G$ also
assumed a monoid, not necessarily a group.
But for $S$ and $T$ semigroups not assumed to have
neutral elements $e,$ we need the above more
roundabout development, including the assumption that $G$ is a group.)

We now bring in another assumption.
(In applying Lemma~\ref{L.pi_extends} with $G$ an additive
subgroup of the real numbers, we switch
to additive notation for formulas involving $\pi$ and $\hat{\pi}.)$

\begin{lemma}\label{L.pi_nonneg}
In the context of Lemma~\ref{L.pi_extends}, if $G$ is an additive
subgroup of the real numbers, and $\pi: A\to G$ assumes
only nonnegative values, then $\hat{\pi}: S\to G$ also
assumes only nonnegative values.

Hence, likewise, in the contexts of
Corollaries~\ref{C.pi_on_prod} and~\ref{C.pi_SxT}, if $G$ is an
additive subgroup of the reals and $\pi$ assumes only
nonnegative values, then $\hat{\pi},$ respectively
$\pi_S$ and $\pi_T,$ also assume only nonnegative values.
\end{lemma}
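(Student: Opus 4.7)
The plan is to establish the first assertion directly; the other assertions will follow mechanically from the corresponding earlier results. Writing $G$ additively, the explicit formula from Lemma~\ref{L.pi_extends} reads $\hat{\pi}(s) = \pi(sx) - \pi(x)$ for any $s \in S$ and any chosen $x \in A$, and the aim is to show $\hat{\pi}(s) \geq 0$ for every $s \in S$.

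The key idea is to apply $\hat{\pi}$ not to $s$ alone but to each of its powers. Fix any $x \in A$; since $A$ is an ideal, $s^n x$ lies in $A$ for every positive integer $n$. Because $\hat{\pi}$ is a homomorphism extending $\pi$, we obtain
$$\pi(s^n x)\;=\;\hat{\pi}(s^n x)\;=\;n\,\hat{\pi}(s)+\pi(x),$$
and the left-hand side is nonnegative by hypothesis, so $n\,\hat{\pi}(s) \geq -\pi(x)$ for every $n \geq 1$. If $\hat{\pi}(s)$ were negative, the left-hand side would tend to $-\infty$ as $n \to \infty$, contradicting this uniform lower bound. Hence $\hat{\pi}(s) \geq 0$, as required.

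For Corollary~\ref{C.pi_on_prod}, the result is immediate: its extension $\hat{\pi}$ is constructed by applying Lemma~\ref{L.pi_extends} to the ideal $\prod_I S_i$ of $\prod_I S'_i$, so the preceding argument applies verbatim. For Corollary~\ref{C.pi_SxT}, the maps $\pi_S$ and $\pi_T$ are specific values of such an extension, namely $\pi_S(s) = \hat{\pi}(s,e)$ and $\pi_T(t) = \hat{\pi}(e,t)$, so nonnegativity of $\hat{\pi}$ forces nonnegativity of $\pi_S$ and $\pi_T$.

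I expect no substantive obstacle. The only mild ingenuity needed is noticing that if $\hat{\pi}(s)$ were negative then the identity $\pi(s^n x) = n\,\hat{\pi}(s)+\pi(x)$ would turn this into arbitrarily negative values of a quantity that is known a priori to be $\geq 0$, which is absurd; that a single iteration $s\mapsto s^n$ suffices is what makes the proof short.
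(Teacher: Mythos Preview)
Your argument is correct and is essentially identical to the paper's: both derive the identity $\pi(s^n x)=n\,\hat{\pi}(s)+\pi(x)$ and observe that a negative value of $\hat{\pi}(s)$ would force $\pi(s^n x)<0$ for large $n$, contradicting the hypothesis. Your treatment of the corollaries is slightly more explicit than the paper's one-line remark, but there is no substantive difference.
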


\begin{proof}
Suppose, in the context of Lemma~\ref{L.pi_extends}
with $G$ a subgroup of the real numbers,
that some $s\in S$ had $\hat{\pi}(s) < 0.$
Then taking any $x\in A,$ there would be a
positive integer $n$ such that $\pi(s^n x)=n\,\hat{\pi}(s)+\pi(x)<0,$
contradicting our hypothesis on $\pi.$

Applying this in the contexts of
Corollaries~\ref{C.pi_on_prod} and~\ref{C.pi_SxT},
we get the final assertion.
\end{proof}

We are ready to hone in on the class of cases we are interested in.
\vspace{.3em}
\begin{equation}\begin{minipage}[c]{35pc}\label{d.k}
For the remainder of this section,
$k$ will be a fixed nonnegative integer, and
$\N$ the additive semigroup of all nonnegative integers,
so that $\N\!+\!k$ will denote the additive
semigroup of integers~$\geq k.$
\end{minipage}\end{equation}

We can now give necessary and sufficient conditions for
a semigroup to have  $\N\!+\!k$ as a direct factor.

\begin{lemma}\label{L.pi_&_nu}
Let $S$ be a semigroup given with a surjective semigroup
homomorphism $\pi: S\,\to\,\N\!+\!k,$ and a set-map $\nu: S\to S$
satisfying the following three conditions:
\begin{equation}\begin{minipage}[c]{35pc}\label{d.pi_nu(s)}
For all $s\in S,$ $\pi(\nu(s))\,=\,\pi(s) + 1.$
\end{minipage}\end{equation}
\begin{equation}\begin{minipage}[c]{35pc}\label{d.nu_bjctv}
$\nu$ gives a bijection $S\to \{s\in S\,\,|\,\,\pi(s)\,\geq\,k\!+\!1\}.$
\end{minipage}\end{equation}
\begin{equation}\begin{minipage}[c]{35pc}\label{d.nu_st}
For all $s,\,s'\in S,$ $\nu(s)\,s'~=~\nu(s\,s')~=~s\,\,\nu(s').$
\end{minipage}\end{equation}

Then there exists an isomorphism of $S$ with the direct product
$(\N\!+\!k)\times U$ of the additive
semigroup $\N\!+\!k$ and a semigroup
$U,$ such that $\pi$ corresponds to the projection to $\N\!+\!k,$ and
$\nu$ to the map $(i,s)\mapsto (i\!+\!1,s).$

Hence, a semigroup $S$ has a direct product decomposition with
$\N\!+\!k$ as a factor if
and only if it admits maps $\pi$ and $\nu$ as above.
\end{lemma}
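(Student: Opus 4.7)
My plan is to construct the desired decomposition by singling out the ``bottom layer'' $U=\pi^{-1}(k)$ and writing every element of $S$ in a canonical normal form relative to $\nu$. By~\eqref{d.nu_bjctv}, $\nu$ has a well-defined inverse $\nu^{-1}$ on $\{s\in S\,|\,\pi(s)\geq k\!+\!1\}$, and by~\eqref{d.pi_nu(s)} each application of $\nu^{-1}$ decreases $\pi$ by one. An easy induction on $\pi(s)-k$ then shows that every $s\in S$ can be written uniquely as $s=\nu^{\pi(s)-k}(u)$ for a unique $u\in U$, namely $u=\nu^{-(\pi(s)-k)}(s)$. I will define $\phi\colon S\to (\N\!+\!k)\times U$ by $\phi(s)=(\pi(s),u)$.

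Next, to promote $\phi$ to a homomorphism, I need a semigroup operation on $U$. Iterated use of~\eqref{d.nu_st} yields $\nu^{i}(a)\,\nu^{j}(b)=\nu^{i+j}(ab)$ for all $a,b\in S$ and all nonnegative $i,j$. In particular, for $u,v\in U$ the product $uv$ lies in $\pi^{-1}(2k)$, so I can define $u\ast v=\nu^{-k}(uv)\in U$ (where $\nu^0$ is the identity map, covering the case $k=0$, in which $U$ is already a subsemigroup of $S$). That $\nu^{-1}$ commutes with both left and right multiplication, itself a direct consequence of~\eqref{d.nu_st}, makes $\ast$ associative: both $(u\ast v)\ast w$ and $u\ast(v\ast w)$ reduce to $\nu^{-2k}(uvw)$.

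I then verify that $\phi$ is a semigroup isomorphism from $S$ onto $(\N\!+\!k)\times U$ with coordinatewise operation. Bijectivity is immediate from the normal form. For the homomorphism property, if $s_1=\nu^{i_1}(u_1)$ and $s_2=\nu^{i_2}(u_2)$, then $s_1 s_2=\nu^{i_1+i_2}(u_1 u_2)=\nu^{i_1+i_2+k}(u_1\ast u_2)$, so $\phi(s_1 s_2)=(\pi(s_1)+\pi(s_2),\,u_1\ast u_2)=\phi(s_1)\phi(s_2)$. By construction $\pi$ corresponds under $\phi$ to the first projection, and $\nu$ to $(i,u)\mapsto(i\!+\!1,u)$, as required.

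For the converse direction (``Hence \ldots''), given $S=(\N\!+\!k)\times U$ take $\pi$ to be the first projection and $\nu(i,u)=(i\!+\!1,u)$; then all three conditions~\eqref{d.pi_nu(s)}--\eqref{d.nu_st} are immediate from the definitions. The main obstacle will be the bookkeeping around $\ast$---checking that $\nu^{-k}$ is legitimately applicable at each intermediate step in the definition of $u\ast v$ and in the verification that the interchange relations with $\nu$ deliver associativity of $\ast$ and multiplicativity of $\phi$---but this is more a matter of careful notation than of conceptual difficulty.
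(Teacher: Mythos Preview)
Your proposal is correct and follows essentially the same route as the paper: you take $U=\pi^{-1}(k)$, define the operation $u\ast v=\nu^{-k}(uv)$ on it, verify associativity via $(u\ast v)\ast w=\nu^{-2k}(uvw)=u\ast(v\ast w)$, and use the map $s\mapsto(\pi(s),\nu^{-(\pi(s)-k)}(s))$ as the isomorphism. The paper's proof is terser---it leaves the homomorphism check and the converse to the reader---while you spell out both; but the construction and the key identities are identical.
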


\begin{proof}
Given $S,~\pi,$ and $\nu$ as above, note that~\eqref{d.pi_nu(s)}
and~\eqref{d.nu_bjctv} together imply that for every $i\geq 0,$
we have the same statements with $\nu$ replaced by $\nu^i$
and ``$+1$'' replaced by ``$+i$''; and that~\eqref{d.nu_st} likewise
holds with $\nu$ replaced by $\nu^i.$

Now define
\begin{equation}\begin{minipage}[c]{35pc}\label{d.S_0=ker}
$U~=~\{s\in S~|~\pi(s)=k\}.$
\end{minipage}\end{equation}
Since $\pi$ is a semigroup homomorphism, its
value on a product of two elements of $U$ will be $2k,$ hence by
the ``$\!\nu^k\!$-version'' of~\eqref{d.nu_bjctv}
we can define a binary operation on~$U$:
\begin{equation}\begin{minipage}[c]{35pc}\label{d.U_mult}
$u\cdot v~=~\nu^{-k}(u\,v).$
\end{minipage}\end{equation}

From~\eqref{d.nu_st} and
the associativity of $S,$ it is easy to check that for all $u,v,w\in U,$
$(u\cdot v)\cdot w=\nu^{-2k}(u\,v\,w)=u\cdot(v\cdot w),$
so~\eqref{d.U_mult} defines a semigroup structure; and to
verify that as a semigroup, $S$ is isomorphic
to $(\N\!+\!k)\times U$ via the map
\begin{equation}\begin{minipage}[c]{35pc}\label{d.nu^-}
$s~\mapsto~(\pi(s),\ \nu^{-\pi(s)+k}(s)).$
\end{minipage}\end{equation}

This proves the main assertion of the lemma, namely
the ``if'' direction of the final sentence.
The forward implication of that final sentence is straightforward.
\end{proof}

Below, by a {\em direct factor}
of a semigroup $S$ we understand a semigroup $P$ such
that $S$ has a direct product decomposition $S =~ P\times Q.$
In particular, Lemma~\ref{L.pi_&_nu} above characterizes the semigroups
having $\N\!+\!k$ as a direct factor.

\begin{convention}\label{Cv.S,T,nu}
For the remainder of this section we shall assume
that $S$ and $T$ are semigroups such that $S\times T$ has
$\N\!+\!k$ as a direct factor.
We will not write down that factorization,
but simply assume we are given maps $\pi: S\times T\to\N\!+\!k$ and
$\nu: S\times T\to S\times T$ satisfying the conditions of
Lemma~\ref{L.pi_&_nu} with $S\times T$ in the role of $S.$
\end{convention}

In view of Corollary~\ref{C.pi_SxT} and Lemma~\ref{L.pi_nonneg},
the map $\pi$ has the form
\begin{equation}\begin{minipage}[c]{35pc}\label{d.pi+pi}
$\pi(s,t)=\pi_S(s)+\pi_T(t),$ where
$\pi_S:S\to\N$ and $\pi_T:T\to\N$ are semigroup homomorphisms.
\end{minipage}\end{equation}
\vspace{-.5em}

We now turn to the properties of the $\nu$
of Convention~\ref{Cv.S,T,nu}.
For the remainder of this section
\begin{equation}\begin{minipage}[c]{35pc}\label{d.nu(s,t)}
for $(s,t)\in S\times T,$ we shall write
$\nu(s,t) = (\nu_t(s),\,\nu_s(t))\in S\times T,$ and
\end{minipage}\end{equation}
\begin{equation}\begin{minipage}[c]{35pc}\label{d.S'T'}
$S'$ \textup{(}resp.~$T')$ will denote the subsemigroup of
elements of $S$ \textup{(}resp.~$T)$ that can be written as products of
two elements of that semigroup.
\end{minipage}\end{equation}
\vspace{-.5em}

\begin{lemma}\label{L.split_nu}
Assuming Convention~\ref{Cv.S,T,nu} and the
notations~\eqref{d.pi+pi}-\eqref{d.S'T'},
\begin{equation}\begin{minipage}[c]{35pc}\label{d.nu_t1=nu_t2}
for all $s\in S',~t_0,\,t_1\in T',$ one has
$\nu_{t_0}(s) = \nu_{t_1}(s),$
and their common value again lies in $S',$\\[.1em]
and likewise\\[.1em]
for all $s_0,\,s_1\in S',~t\in T',$ one has
$\nu_{s_0}(t) = \nu_{s_1}(t),$
and their common value again lies in $T'.$
\end{minipage}\end{equation}
Hence we will make the simplification of notation
\begin{equation}\begin{minipage}[c]{35pc}\label{d.nu(s)=nu_t(s)}
for $s\in S',$ we shall denote by $\nu(s)$ the common value
of $\nu_t(s)\in S'$ for all $t\in T',$ and\\[.2em]
for $t\in T',$ we shall denote by $\nu(t)$ the common value
of $\nu_s(t)\in T'$ for all $s\in S',$
\end{minipage}\end{equation}
so that
\begin{equation}\begin{minipage}[c]{35pc}\label{d.(nu(s),nu(t))}
for all $s\in S',~t\in T',$ $\nu(s,t) = (\nu(s),\,\nu(t)).$
\end{minipage}\end{equation}
Moreover,
\begin{equation}\begin{minipage}[c]{35pc}\label{d.nu_t(s)s'}
for all $s,\,s'\in S$ and $t\in T,$
$\nu_t(s)\,s'~=~\nu(s\,s')~=~s\,\nu_t(s'),$ and\\[.2em]
for all $s \in S$ and $t,\,t'\in T,$
$\nu_s(t)\,t'~=~\nu(t\,t')~=~t\,\nu_s(t').$
\end{minipage}\end{equation}
\end{lemma}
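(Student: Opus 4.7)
The plan is to apply the product-respecting identity \eqref{d.nu_st} of Lemma~\ref{L.pi_&_nu} (with $S\times T$ in the role of $S$) and expand in coordinates. Writing $\nu((s,t)(s',t')) = \nu(s,t)(s',t') = (s,t)\,\nu(s',t')$ and unpacking via \eqref{d.nu(s,t)}, the $S$-coordinates give the master identity
\[
\nu_{tt'}(ss') \;=\; \nu_t(s)\,s' \;=\; s\,\nu_{t'}(s')
\qquad (s,s'\in S,\; t,t'\in T),
\]
and the $T$-coordinates give the analogous identity with the roles of $S$ and $T$ swapped. From this point on, everything in the lemma should flow formally.

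The first useful consequence of the master identity is that $\nu_t(s)\,s'$, viewed as an element of $S$, is independent of $t$ (it equals $s\,\nu_{t_0}(s')$ for any chosen $t_0\in T$), and symmetrically $s\,\nu_{t'}(s')$ is independent of $t'$. To prove \eqref{d.nu_t1=nu_t2}, fix $s\in S'$ with some factorization $s = s_0 s_1$ and take an arbitrary $\tau\in T'$; choosing any factorization $\tau = a b$ with $a,b\in T$, the master identity yields $\nu_\tau(s) = \nu_{ab}(s_0 s_1) = \nu_a(s_0)\,s_1$. The latter is independent of $a$ by the observation just made, and is visibly a product of two elements of $S$, so lies in $S'$. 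Hence $\nu_\tau(s)$ takes the same value in $S'$ for every $\tau\in T'$. The parallel argument using the second master identity gives the $T$-version. With this, the notation \eqref{d.nu(s)=nu_t(s)} is legitimate and \eqref{d.(nu(s),nu(t))} is immediate from \eqref{d.nu(s,t)}.

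For \eqref{d.nu_t(s)s'}, note that $ss'\in S'$, so $\nu(ss')$ is defined by \eqref{d.nu(s)=nu_t(s)}. The set $T'$ is nonempty (e.g., $t^2\in T'$ for any $t\in T$), so we may pick some $\tau_0 = ab\in T'$ and compute $\nu(ss') = \nu_{\tau_0}(ss') = \nu_a(s)\,s' = s\,\nu_b(s')$ by the master identity. Invoking the independence-from-subscript observation one more time, $\nu_a(s)\,s' = \nu_t(s)\,s'$ and $s\,\nu_b(s') = s\,\nu_t(s')$ for the given $t\in T$, which is the chain of equalities \eqref{d.nu_t(s)s'}; the second half of \eqref{d.nu_t(s)s'} is identical with the roles of $S$ and $T$ swapped. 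I do not anticipate a substantive obstacle; the only thing to watch is that the three terms of each master identity must remain cleanly linked, so that ``independence from the subscript'' can be combined with computation for a specific factorization of $\tau$.
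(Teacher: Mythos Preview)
Your argument is correct and follows the same route as the paper: derive the coordinate identity $\nu_{tt'}(ss')=\nu_t(s)\,s'=s\,\nu_{t'}(s')$ from \eqref{d.nu_st} and \eqref{d.nu(s,t)}, read off that the value is independent of the $T$-subscript, and deduce \eqref{d.nu_t1=nu_t2}, \eqref{d.(nu(s),nu(t))}, and \eqref{d.nu_t(s)s'} from it. The only cosmetic difference is that you state the full three-term ``master identity'' at the outset, whereas the paper records one half and invokes the left--right dual for the other.
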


\begin{proof}
For any $s,\,s'\in S$ and $t,\,t'\in T,$ we have,
by Convention~\ref{Cv.S,T,nu} and~\eqref{d.nu_st},
$\nu(s s',\,t t') = \nu(s, t)\,(s', t').$
Applying~\eqref{d.nu(s,t)} to the first factor on the right-hand side,
this gives $\nu(s s',\,t t') = (\nu_{t}(s)\,s',\,\nu_{s}(t')\,t').$
Taking first components, we have
\begin{equation}\begin{minipage}[c]{35pc}\label{d.nu(t_t')}
$\nu_{t t'}(s\,s')~=~\nu_{t}(s)\,s'.$
\end{minipage}\end{equation}

Note that the right-hand side does not depend on~$t',$
hence neither does the left-hand side; and by the
right-left dual argument, it likewise does not depend on $t,$
so we get the same value on replacing the subscript $t\,t'$ by
any element of $T'.$
Here the argument of $\nu,$ a product of two
arbitrary elements of $S,$ can be any element of $S'.$
This proves the first line of~\eqref{d.nu_t1=nu_t2},
allowing us to adopt the notation in the first line
of~\eqref{d.nu(s)=nu_t(s)}.
The first equality
of the first line of~\eqref{d.nu_t(s)s'} follows; and
by interchanging right and left multiplication, and
the roles of $S$ and $T,$ as appropriate, we
get the remaining equalities
of~\eqref{d.nu_t1=nu_t2} and~\eqref{d.nu_t(s)s'}.
\end{proof}

At this point, we cannot assert~\eqref{d.nu_t1=nu_t2}
without the restriction of the arguments to $S'$ and $T'.$
But we can deduce a strong result on the behavior of
$\pi_S$ and~$\pi_T$ on arbitrary elements:

\begin{lemma}\label{L.S_or_T}
For $S,$ $T,$ $\pi$ and $\nu$ as in Convention~\ref{Cv.S,T,nu}, we have
{\em either}
\begin{equation}\begin{minipage}[c]{35pc}\label{d.pi_S_nu_t}
for all $s\in S,~t\in T,$ $\pi_S(\nu_t(s))~=~\pi_S(s)+1\,$
and\, $\pi_T(\nu_s(t))~=~\pi_T(t),$ {\em or}\\[.3em]
for all $s\in S,~t\in T,$
$\pi_S(\nu_t(s))~=~\pi_S(s)\,$ and\, $\pi_T(\nu_s(t))~=~\pi_T(t)+1.$
\end{minipage}\end{equation}
\end{lemma}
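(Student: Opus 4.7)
The plan is to establish the dichotomy first on the restricted domain $S' \times T'$, where Lemma~\ref{L.split_nu} gives the clean splitting $\nu(s,t) = (\nu(s), \nu(t))$, and then bootstrap to all of $S \times T$ using the identity $\nu_t(s)\,s' = \nu_{tt'}(ss')$ already derived during the proof of that lemma. Throughout, I rely on $S'$ and $T'$ being nonempty, which holds because $s_0^2 \in S'$ for any $s_0 \in S$ (similarly for $T$).

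Fix any $s \in S'$ and $t \in T'$. Combining $\pi(\nu(s,t)) = \pi(s,t)+1$ from~\eqref{d.pi_nu(s)}, the decomposition $\pi = \pi_S + \pi_T$ from~\eqref{d.pi+pi}, and $\nu(s,t) = (\nu(s),\nu(t))$ from~\eqref{d.(nu(s),nu(t))}, one obtains
$$(\pi_S(\nu(s)) - \pi_S(s)) + (\pi_T(\nu(t)) - \pi_T(t)) \,=\, 1.$$
Since the first summand depends only on $s$ and the second only on $t$ while the total is fixed, each is separately a constant; call them $a$ and $b$, so $a+b=1$. To see that $a \geq 0$, observe that Lemma~\ref{L.split_nu} lets $\nu$ be iterated on $S'$, giving $\pi_S(\nu^n(s)) = \pi_S(s) + na$ for all $n \geq 0$; since Lemma~\ref{L.pi_nonneg} guarantees $\pi_S \geq 0$, a negative $a$ would eventually produce a negative value. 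Symmetrically $b \geq 0$, so with $a, b$ nonnegative integers summing to $1$ we conclude $(a,b) \in \{(1,0),(0,1)\}$.

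After taking (without loss of generality) $(a,b) = (1,0)$, it remains to remove the restriction $s \in S',\, t \in T'$. Here I would invoke the identity $\nu_t(s)\,s' = \nu_{tt'}(ss')$ from~\eqref{d.nu(t_t')} (valid for arbitrary $s, s' \in S$ and $t, t' \in T$), and note that since $ss' \in S'$ and $tt' \in T'$ the right-hand side equals $\nu(ss')$. Applying $\pi_S$ and using $a=1$ gives
$$\pi_S(\nu_t(s)) + \pi_S(s') \,=\, \pi_S(\nu(ss')) \,=\, \pi_S(s) + \pi_S(s') + 1,$$
whence $\pi_S(\nu_t(s)) = \pi_S(s) + 1$ for arbitrary $s$ and $t$; the right-left dual of this calculation (using the second half of~\eqref{d.nu_t(s)s'}) yields $\pi_T(\nu_s(t)) = \pi_T(t)$. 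The main obstacle I anticipate is the sign argument pinning $a$ and $b$ down: the relation $a+b=1$ alone permits any integer split, and it is only the combination of nonnegativity of $\pi_S, \pi_T$ with the invariance of $S', T'$ under $\nu$ (so that iterates make sense) that forces the clean dichotomy.
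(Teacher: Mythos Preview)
Your argument is correct and follows essentially the same route as the paper's proof: both establish the constant shifts $a+b=1$ via Lemma~\ref{L.split_nu} on $S'\times T'$, then bootstrap to all of $S\times T$ using~\eqref{d.nu_t(s)s'}. The only minor divergence is in the nonnegativity step---the paper first extends the formula $\pi_S(\nu_t(s))=\pi_S(s)+m$ to all $s\in S$ and then picks an $s$ minimizing $\pi_S$, whereas you iterate $\nu$ within $S'$; both are valid and equally short.
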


\begin{proof}
Let us fix arbitrary elements
\begin{equation}\begin{minipage}[c]{35pc}\label{d.s0t0}
$s_0\in S',$ $t_0\in T',$
\end{minipage}\end{equation}
and let $m$ and $n$ be the integers such that
\begin{equation}\begin{minipage}[c]{35pc}\label{d.pi(nu(s0))}
$\pi_S(\nu(s_0))~=~\pi_S(s_0)+m$\hspace{.5em} and\hspace{.5em}
$\pi_T(\nu(t_0))~=~\pi_T(t_0)+n.$
\end{minipage}\end{equation}
Applying~\eqref{d.pi_nu(s)} (with $S\times T$ in the role of $S)$
and~\eqref{d.pi+pi} to $(s_0,\,t_0)$ we see that
\begin{equation}\begin{minipage}[c]{35pc}\label{d.m+n=1}
$m + n\,=\,1.$
\end{minipage}\end{equation}

Now note that for any $s\in S,~t\in T,$ we have the following
equalities (using, at the first and third steps, the
fact that $\pi_S$ is a homomorphism, at the
second,~\eqref{d.nu_t(s)s'}, and at the last,~\eqref{d.pi(nu(s0))}):
\begin{equation}\begin{minipage}[c]{35pc}\label{d.pi_S...s0}
$\pi_S(\nu_t(s)) + \pi_S(s_0) = \pi_S(\nu_t(s)\,s_0) =
\pi_S(s\,\nu(s_0))
= \pi_S(s) + \pi_S(\nu(s_0)) = \pi_S(s) + \pi_S(s_0) + m.$
\end{minipage}\end{equation}
Cancelling $\pi_S(s_0)$ at the beginning and end, we get
\begin{equation}\begin{minipage}[c]{35pc}\label{d.pi...+m}
$\pi_S(\nu_t(s))~=~\pi_S(s) + m$ for all $s\in S,$ $t\in T.$
\end{minipage}\end{equation}
Reversing the roles of $S$ and $T,$ we likewise have
\begin{equation}\begin{minipage}[c]{35pc}\label{d.pi...+n}
$\pi_T(\nu_s(t))~=~\pi_T(t) + n$ for all $s\in S,$ $t\in T.$
\end{minipage}\end{equation}

Now by~\eqref{d.pi+pi}, $\pi_S$ assumes only nonnegative values,
so we can choose an $s\in S$ minimizing $\pi_S(s),$
and applying~\eqref{d.pi...+m} to this $s,$
we see that~$m$ must be nonnegative; and the
analogous argument shows the same for~$n.$
From~\eqref{d.m+n=1}, it follows that one of $m$
and $n$ is $0$ and the other $1.$
Substituting into~\eqref{d.pi...+m} and~\eqref{d.pi...+n}
we get~\eqref{d.pi_S_nu_t}.
\end{proof}

\begin{corollary}\label{C.split_pi}
If the first alternative of~\eqref{d.pi_S_nu_t} holds, then
the function $\pi_T$ is identically $0$ and $\pi_S(S)=\N\!+\!k,$
while if the second holds, $\pi_S$ is identically $0$ and
$\pi_T(T)=\N\!+\!k.$
\end{corollary}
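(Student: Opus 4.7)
The plan is to handle the first alternative of \eqref{d.pi_S_nu_t} and then invoke symmetry. So assume the first alternative holds: for all $s\in S$, $t\in T$, $\pi_S(\nu_t(s)) = \pi_S(s)+1$ and $\pi_T(\nu_s(t)) = \pi_T(t)$.

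First I would pin down the minima. By \eqref{d.pi+pi} and Lemma~\ref{L.pi_nonneg}, $\pi_S$ and $\pi_T$ take values in $\N$, so since $S$ and $T$ are nonempty we may set $m_0 = \min\pi_S(S)$ and $n_0 = \min\pi_T(T)$. Since $\pi(s,t)=\pi_S(s)+\pi_T(t)$ and $\pi$ is surjective onto $\N\!+\!k$ by Convention~\ref{Cv.S,T,nu}, the minimum value of $\pi_S(s)+\pi_T(t)$ is $k$, giving $m_0+n_0=k$.

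Next, the key step: I claim $\pi_T$ is constant, equal to $n_0$. Suppose for contradiction that $\pi_T(t^*)>n_0$ for some $t^*\in T$, and pick $s^*\in S$ with $\pi_S(s^*)=m_0$. Then $\pi(s^*,t^*)=m_0+\pi_T(t^*)>m_0+n_0=k$, so $(s^*,t^*)$ lies in the set $\{w\in S\times T : \pi(w)\geq k\!+\!1\}$, which by~\eqref{d.nu_bjctv} is the image of $\nu$. Writing $(s^*,t^*)=\nu(s,t)$ and applying the first alternative of~\eqref{d.pi_S_nu_t} gives $\pi_S(s^*)=\pi_S(s)+1\geq m_0+1$, contradicting $\pi_S(s^*)=m_0$. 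Hence $\pi_T\equiv n_0$.

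Finally I would pin down $n_0$: since $T$ is nonempty, pick any $t\in T$ and form $t^2\in T$. Because $\pi_T$ is a semigroup homomorphism, $n_0=\pi_T(t^2)=\pi_T(t)+\pi_T(t)=2n_0$, forcing $n_0=0$. Then $\pi_T\equiv 0$, and since $\pi_S(S)+\pi_T(T)=\pi(S\times T)=\N\!+\!k$ as subsets of $\N$, we conclude $\pi_S(S)=\N\!+\!k$. The second alternative follows by swapping the roles of $S$ and $T$.

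I do not expect a real obstacle here; the only subtle point is making sure the minima $m_0, n_0$ actually exist (which is where nonemptiness of $S$ and $T$ and integrality of $\pi_S,\pi_T$ enter) and then reading off from the bijectivity clause~\eqref{d.nu_bjctv} the essential asymmetry that the image of $\nu$ cannot contain any element whose first coordinate achieves the minimum of $\pi_S$.
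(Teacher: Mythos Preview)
Your argument is correct and follows essentially the same route as the paper: both proofs pick $s^*$ minimizing $\pi_S$, use~\eqref{d.nu_bjctv} to write a suitable $(s^*,t^*)$ as $\nu(s,t)$, and derive the contradiction $\pi_S(s^*)=\pi_S(s)+1>m_0$ from the first alternative of~\eqref{d.pi_S_nu_t}. The only cosmetic difference is that the paper invokes the homomorphism property of $\pi_T$ up front (nonzero $\Rightarrow$ unbounded) to get $\pi(s^*,t^*)\geq k{+}1$ directly, whereas you first establish $m_0+n_0=k$ to show $\pi_T$ is constant and then use the homomorphism property afterward to force $n_0=0$.
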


\begin{proof}
Assume without loss of generality that
the first alternative of~\eqref{d.pi_S_nu_t} holds,
and suppose $\pi_T$ were not identically zero.
Then it would assume some positive value, and being a semigroup
homomorphism, would assume arbitrarily large values.

Let $s\in S$ be an element at which $\pi_S$ takes on its least value.
By the above observation, we can choose $t\in T$ such that the
value of $\pi(s,\,t)=\pi_S(s)+\pi_T(t)$ is $\geq k\!+\!1.$

Then by~\eqref{d.nu_bjctv} applied to $S\times T,$
this implies that for some $(s',t')\in S\times T,$
$(s,t)=\nu(s',t')=(\nu_{t'}(s'),\nu_{s'}(t'))$
By the case of~\eqref{d.pi_S_nu_t} we are assuming,
this gives $\pi_S(s)=\pi_S(\nu_{t'}(s'))=\pi_S(s')+1,$
so $\pi_S(s')<\pi_S(s),$ contradicting our choice of $s.$

So $\pi_T$ is indeed identically zero.
Substituting this into~\eqref{d.pi+pi} and observing
that $\pi(S\times T)=\N\!+\!k$
by Convention~\ref{Cv.S,T,nu} and the conditions of
Lemma~\ref{L.pi_&_nu} it refers to,
we conclude that $\pi_S(S)=\N\!+\!k,$ as claimed.
\end{proof}

\section{Primeness results, at last}\label{S.N_etc}

We are now ready to note some
subcategories of $\mathbf{Semigp}$ in which we can use the
above results to prove some or all of the semigroups $\N\!+\!k$ prime.
These categories cannot, of course, contain the semigroups
$\textup{Null}(\kappa)$ that were used to prove
{\em non\,}-primeness in~\S\ref{S.Main}.
Here are some restrictions that suffice.

\begin{definition}\label{D.cancellative}
We shall call a semigroup $S$\\[.3em]
{\rm(i)} {\em cancellative} if it satisfies
\begin{equation}\begin{minipage}[c]{35pc}\label{d.rt_cncl}
for all $s\neq s'\in S$ and $s''\in S,$ one has $s\,s''\neq s'\,s'',$
and
\end{minipage}\end{equation}
\vspace{-1.8em}
\begin{equation}\begin{minipage}[c]{35pc}\label{d.lt_cncl}
for all $s\neq s'\in S$ and $s''\in S,$ one has $s''\,s\neq s''\,s',$
\end{minipage}\end{equation}
\vspace{-.8em}\\
{\rm(ii)} {\em right} cancellative if it
merely satisfies~\eqref{d.rt_cncl},
respectively, {\em left} cancellative if it
merely satisfies~\eqref{d.lt_cncl},\\[.3em]
and,\\[.3em]
{\rm(iii)} {\em weakly} cancellative if it satisfies
\begin{equation}\begin{minipage}[c]{35pc}\label{d.wk_cncl}
for all $s\neq s'\in S$ there {\em exists} an $s''\in S$ such
that one of the inequalities $s\,s''\neq s'\,s''$ or
$s''\,s\neq s''\,s'$ holds
\end{minipage}\end{equation}
\textup{(}i.e.~if, in the language of
Definition~\ref{D.null}\,{\rm(ii),}
no two distinct elements of $S$ are action equivalent\textup{)}.
\end{definition}

It is easy to verify

\begin{lemma}\label{L.prods_&_cdns}
For each of conditions~\eqref{d.rt_cncl}, \eqref{d.lt_cncl},
\eqref{d.wk_cncl}, a direct product semigroup $S\times T$ satisfies
the condition in question if and only if both $S$ and $T$ do.\qed
\end{lemma}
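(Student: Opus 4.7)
The plan is to unpack each cancellativity condition using the coordinatewise definition $(s,t)(s'',t'') = (s s'',\,t t'')$ and verify both directions of the ``if and only if'' by direct computation. The only non-formal ingredient needed is that, thanks to our standing convention that semigroups are nonempty, given any element of one factor we can always pad it with some element of the other.

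For \eqref{d.rt_cncl}: if $S$ and $T$ both right-cancel, then the equation $(s_1,t_1)(s'',t'')=(s_1',t_1')(s'',t'')$ splits coordinatewise and forces $(s_1,t_1)=(s_1',t_1')$. Conversely, given $s\ne s'$ in $S$ and $s''\in S$, choose any $t\in T$; then $(s,t)\ne(s',t)$, so right cancellation of $S\times T$ on multiplication by $(s'',t)$ yields $s s''\ne s' s''$. The same argument applied to $T$, and the mirror-image argument, handle $T$ and \eqref{d.lt_cncl}. For \eqref{d.wk_cncl}: if $S$ and $T$ are both weakly cancellative and $(s,t)\ne(s',t')$, then $s\ne s'$ or $t\ne t'$; in the first case, take $s''\in S$ witnessing \eqref{d.wk_cncl} for the pair $s,s'$ and any $t''\in T$, and observe that one of $(s,t)(s'',t'')\ne(s',t')(s'',t'')$ or $(s'',t'')(s,t)\ne(s'',t'')(s',t')$ holds according to whether $s s''\ne s' s''$ or $s'' s\ne s'' s'$; the case $t\ne t'$ is symmetric. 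Conversely, if $S\times T$ is weakly cancellative and $s\ne s'$ in $S$, pick any $t\in T$ so $(s,t)\ne(s',t)$; a separating $(s'',t'')$ for this pair in $S\times T$ produces a separating $s''$ for $s,s'$ in $S$ on the corresponding side, and likewise for $T$.

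There is no real obstacle here; the computations are entirely mechanical. The only point demanding even momentary attention is the two-subcase split in the forward direction of \eqref{d.wk_cncl}, which is necessary because the inequality in \eqref{d.wk_cncl} may be witnessed on either the left or the right, and one has to carry along an arbitrary element of the other factor to lift the witness into $S\times T$.
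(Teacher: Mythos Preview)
Your proof is correct. The paper itself gives no proof at all for this lemma: it is stated with an immediate \qed\ after the sentence ``It is easy to verify,'' so your direct coordinatewise verification is precisely the kind of argument the author had in mind and left to the reader.
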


We can now prove

\begin{theorem}\label{T.cancellative}
For all nonnegative integers $k,$ the semigroup $\N\!+\!k$ is prime
in the category of cancellative semigroups, and, more generally,
in the categories of right cancellative and left cancellative
semigroups, and still more generally, in the category of
weakly cancellative semigroups.
\end{theorem}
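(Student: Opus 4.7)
The plan is to prove the strongest statement, primeness in the category of weakly cancellative semigroups; the other three cases then follow automatically, since each of the four cancellativity conditions is inherited by direct factors (immediate from Lemma~\ref{L.prods_&_cdns} together with the definitions), so a decomposition witnessing primeness in the weak case automatically lies in any smaller category that contains the given semigroups $S$ and $T$.

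So assume $S$ and $T$ are weakly cancellative and that $\N+k$ is a direct factor of $S\times T$. Adopt Convention~\ref{Cv.S,T,nu} and the notations~\eqref{d.pi+pi}--\eqref{d.nu(s,t)}; by Corollary~\ref{C.split_pi} we may assume without loss of generality that $\pi_T\equiv 0$ and $\pi_S(S)=\N+k$. The goal is to show that $\N+k$ is a direct factor of $S$.

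The crux is to upgrade Lemma~\ref{L.split_nu} by showing, under weak cancellativity, that $\nu_t(s)$ is independent of $t$ for every $s\in S$ (not merely for $s\in S'$), and symmetrically for $\nu_s(t)$. The identity $\nu_t(s)\,s'=\nu(ss')=s\,\nu_t(s')$ of~\eqref{d.nu_t(s)s'}, on swapping the names of $s$ and $s'$ in the right-hand equality, also yields $s'\,\nu_t(s)=\nu(s's)$. Both $\nu_t(s)\,s'$ and $s'\,\nu_t(s)$ are therefore independent of $t$; so for any $s\in S$ and any $t,t_1\in T$ the elements $\nu_t(s)$ and $\nu_{t_1}(s)$ are action equivalent in $S$, and weak cancellativity forces them to be equal. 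Write $\nu_S(s)$ and $\nu_T(t)$ for the common values, so that $\nu(s,t)=(\nu_S(s),\nu_T(t))$.

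It is now routine to verify that the pair $(\pi_S,\nu_S)$ satisfies the three conditions of Lemma~\ref{L.pi_&_nu} with $S$ in the role of the whole semigroup: condition~\eqref{d.pi_nu(s)} is the first alternative of~\eqref{d.pi_S_nu_t}; condition~\eqref{d.nu_bjctv} (bijectivity of $\nu_S$ onto $\{s\in S\mid\pi_S(s)\geq k+1\}$) falls out of the Cartesian-product shape of the image of $\nu$, together with the fact that $\nu_T$ must then be a bijection $T\to T$; and condition~\eqref{d.nu_st} is the first coordinate of the corresponding identity for $\nu$. Lemma~\ref{L.pi_&_nu} then produces an isomorphism $S\cong(\N+k)\times U$, giving the required direct factor decomposition in the category. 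The main anticipated difficulty is precisely the extension of $t$-independence of $\nu_t(s)$ from $S'$ to all of $S$; once that is in hand the rest is bookkeeping, and weak cancellativity is exactly the hypothesis needed, since the proof of Theorem~\ref{T.no_primes} shows that pairs of distinct action-equivalent elements (which weak cancellativity excludes) can otherwise be exploited to defeat primeness.
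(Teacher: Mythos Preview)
Your proof is correct and follows essentially the same route as the paper's: use weak cancellativity together with~\eqref{d.nu_t(s)s'} to show that $\nu_t(s)$ and $\nu_{t_1}(s)$ are action equivalent, hence equal, then verify that $(\pi_S,\nu_S)$ satisfies the hypotheses of Lemma~\ref{L.pi_&_nu}. The only cosmetic difference is order: you invoke Corollary~\ref{C.split_pi} before proving $t$-independence of $\nu_t(s)$, whereas the paper does so afterward; since Lemma~\ref{L.S_or_T} and Corollary~\ref{C.split_pi} do not use cancellativity, either order works.
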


\begin{proof}
We shall prove $\N\!+\!k$ prime in the
category of weakly cancellative semigroups.
In view of Lemma~\ref{L.prods_&_cdns}, the other categories
mentioned are closed in that one under pairwise
direct products and direct factors, hence primeness of $\N\!+\!k$ in
that category will imply primeness in the others.

So suppose $S$ and $T$ are weakly cancellative semigroups such that
$\N\!+\!k$ is a direct factor of $S\times T.$
As shown in  the preceding section, this is equivalent to
the existence of maps $\pi: S\times T\to\N\!+\!k$ and
$\nu: S\times T\to S\times T$
satisfying~\eqref{d.pi_nu(s)}-\eqref{d.nu_st}.
As in \eqref{d.nu(s,t)}, for $(s,t)\in S\times T,$ we shall write
$\nu(s,t) = (\nu_t(s),\,\nu_s(t)).$

We claim that under our present hypotheses, $\nu_t(s)$ must in fact
be a function of $s$ alone, and $\nu_s(t)$ a function of $t$ alone.
By symmetry, it suffices to prove the former statement.

So suppose that for some $s\in S,$ $t,\,t'\in T$ we had
$\nu_t(s)\neq\nu_{t'}(s).$
We now invoke weak cancellativity.
By left-right symmetry with respect to order of
operations, we may assume without loss
of generality that there is some $s'\in S$ such that
$\nu_t(s)\,s'\neq\nu_{t'}(s)\,s'.$
But by \eqref{d.nu_t(s)s'}, both sides equal $\nu(s\,s'),$
a contradiction.

Hence the operators $\nu_t\!: S\to S$ $(t\in T)$
are all equal, and likewise the operators $\nu_s\!: T\to T$ $(s\in S).$
Writing the common values of each as $\nu:S\to S$ and
$\nu:T\to T,$~\eqref{d.nu(s,t)} becomes
\begin{equation}\begin{minipage}[c]{35pc}\label{d.better_nu}
for all $s\in S,$ $t\in T,$ $\nu(s,t) = (\nu(s),\,\nu(t)).$
\end{minipage}\end{equation}

Now by Lemma~\ref{L.S_or_T}, one of the functions $\pi_S,$
$\pi_T$ adds $1$ whenever $\nu$ is applied to its argument;
without loss of generality, let us assume that is $\pi_S.$
Then by Corollary~\ref{C.split_pi}, $\pi_S$ takes on
all the values in $\N\!+\!k$ and $\pi_T$ is identically~$0.$

From the fact that the maps $\pi$ and $\nu$ on $S\times T$
satisfy the conditions~\eqref{d.pi_nu(s)}-\eqref{d.nu_st}
of Lemma~\ref{L.pi_&_nu} (with $S\times T$ in the role of $S),$
we now see that $\pi_S: S\to \N\!+\!k$
and $\nu: S\to S$ satisfy those same conditions,
hence, by that lemma, $\N\!+\!k$ is a direct factor in $S.$
(If, rather, $\pi_T$ is the function that, as indicated
in the preceding paragraph, adds $1$ when $\nu$ is applied
to its argument, then $T$ has $\N\!+\!k$ as a direct factor.)
\end{proof}

Our other result is easier to prove, but applies
only to $\N,$ since for $k>0,$ $\N\!+\!k$ does not belong
to the categories in question.

\begin{theorem}\label{T.monoids&}
The object $\N$ is prime in the full subcategory of
$\mathbf{Semigp}$ whose objects are the monoids, and, more generally,
in the full subcategory of $\mathbf{Semigp}$ whose objects are the
semigroups in which every element is a product.
\end{theorem}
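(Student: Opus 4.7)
The plan is to imitate the strategy of Theorem~\ref{T.cancellative}, but use the hypothesis ``every element is a product'' to replace the role played there by weak cancellativity. Observe first that the category of monoids sits inside the larger category described: in a monoid $M$, any $s\in M$ equals $s\cdot e$, so every element is a product. Hence it suffices to prove $\N$ prime in the larger category.

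So suppose $S$ and $T$ are semigroups in which every element is a product, and that $\N$ is a direct factor of $S\times T$. Applying the setup of Convention~\ref{Cv.S,T,nu} with $k=0$, we obtain maps $\pi:S\times T\to\N$ and $\nu:S\times T\to S\times T$ satisfying \eqref{d.pi_nu(s)}--\eqref{d.nu_st}; by Corollary~\ref{C.pi_SxT} and Lemma~\ref{L.pi_nonneg}, $\pi$ decomposes as $\pi_S+\pi_T$ with $\pi_S,\pi_T$ semigroup homomorphisms to $\N$.

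The key point is that the hypothesis gives $S'=S$ and $T'=T$ in the sense of~\eqref{d.S'T'}. Therefore Lemma~\ref{L.split_nu} applies with no restriction on the arguments: writing $\nu(s,t)=(\nu_t(s),\nu_s(t))$, the map $\nu_t(s)$ does not depend on $t$ and $\nu_s(t)$ does not depend on $s$, so we get well-defined maps $\nu:S\to S$ and $\nu:T\to T$ with $\nu(s,t)=(\nu(s),\nu(t))$, satisfying $\nu(s)s'=\nu(ss')=s\,\nu(s')$ in $S$ and analogously in $T$.

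From Lemma~\ref{L.S_or_T} and Corollary~\ref{C.split_pi}, one of two symmetric cases holds; without loss of generality $\pi_T\equiv 0$, $\pi_S(S)=\N$, and $\pi_S(\nu(s))=\pi_S(s)+1$ for all $s\in S$. It remains to verify that $\pi_S:S\to\N$ and $\nu:S\to S$ satisfy the three hypotheses of Lemma~\ref{L.pi_&_nu}, which then produces the desired direct factor decomposition $S\cong\N\times U$. Condition~\eqref{d.pi_nu(s)} is just noted, and~\eqref{d.nu_st} was recorded in the preceding paragraph. The only point requiring any thought is bijectivity~\eqref{d.nu_bjctv} of $\nu:S\to\{s:\pi_S(s)\geq 1\}$; I would deduce injectivity by fixing any $t\in T$ and using injectivity of the full $\nu$ on $S\times T$ applied to $(s,t)$ and $(s',t)$, and surjectivity by noting that for $s\in S$ with $\pi_S(s)\geq 1$ and any $t\in T$, $\pi(s,t)\geq 1$, so $(s,t)=\nu(s',t')=(\nu(s'),\nu(t'))$ for some $(s',t')$, whence $s=\nu(s')$. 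This step is essentially bookkeeping and is the least delicate part of the argument; the substantive work has already been done in Section~\ref{S.prep} and in the extraction of $\nu:S\to S$ from the product-element hypothesis.
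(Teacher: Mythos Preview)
Your proposal is correct and follows essentially the same route as the paper: the paper's proof simply observes that under the hypothesis $S'=S$ and $T'=T$, Lemma~\ref{L.split_nu} already yields~\eqref{d.better_nu}, and then says ``complete the proof as for Theorem~\ref{T.cancellative}.'' You have spelled out that completion (in particular the verification of~\eqref{d.nu_bjctv} for $\nu:S\to S$) in more detail than the paper does, but the argument is the same.
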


\begin{proof}
In the latter category, Lemma~\ref{L.split_nu} shows that all
the functions $\nu_t: S\to S$ are equal, and likewise
the functions $\nu_s: T\to T.$
This gives us~\eqref{d.better_nu}, and we complete the
proof as for Theorem~\ref{T.cancellative}.
\end{proof}

The monoid case of Theorem~\ref{T.monoids&} is also
implied by the ``weakly cancellative'' case of
Theorem~\ref{T.cancellative}, since multiplication by
the identity element is cancellable.

We can easily go from the above result about monoids
as a full subcategory of $\mathbf{Semigp}$
to one about the category of monoids and {\em monoid} homomorphisms:

\begin{proposition}\label{P.monoids_as_such}
A monoid $S$ which is prime in the full subcategory of $\mathbf{Semigp}$
whose objects are the monoids is also prime in the category of monoids
{\rm(}i.e., the subcategory thereof where morphisms are required
to respect identity elements{\rm)}.
In particular, $\N$ is prime in that category.
\end{proposition}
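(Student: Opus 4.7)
The plan is very short: I would show that the two categories in question give rise to the same notions of ``direct product'' and ``direct factor'' on monoids, so that primeness transfers automatically.

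The key observation is that any semigroup isomorphism $\phi: A \to B$ between two monoids is automatically a monoid isomorphism. Indeed, $\phi(e_A)$ must be a two-sided identity element of $B$ (since $e_A$ is one for $A$ and $\phi$ is a bijective semigroup homomorphism), and identity elements of a monoid are unique, so $\phi(e_A) = e_B$. Combined with the fact that for monoids $A$ and $B$, the cartesian product $A \times B$ with componentwise operation and identity $(e_A, e_B)$ serves as the direct product in both categories (it satisfies the appropriate universal property in each, since any semigroup homomorphism from a monoid $Q$ into such a product is a monoid homomorphism iff its two component maps are), this yields that the statements ``$Y \cong Y_0 \times Y_1$'' and ``$Y$ admits $X$ as a direct factor'' have identical content whether interpreted in the full subcategory of $\mathbf{Semigp}$ whose objects are monoids, or in the category of monoids. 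Finally, the final object in each category is the trivial one-element monoid, so the ``non-final'' clause in Definition~\ref{D.gen}(ii) matches as well.

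Putting these observations together, primeness of a monoid $S$ in the full subcategory of $\mathbf{Semigp}$ on monoids is equivalent, clause by clause, to primeness of $S$ in the category of monoids, which establishes the first assertion. The specific claim about $\N$ is then immediate from Theorem~\ref{T.monoids&}. I expect no real obstacle here; the only mildly subtle point is the automatic preservation of the identity element under semigroup isomorphisms between monoids, on which the whole argument rests.
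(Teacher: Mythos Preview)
Your proposal is correct and follows essentially the same approach as the paper's sketch: both arguments show that the notions of direct product and direct factor coincide in the two categories, so that the clauses defining primeness match up. The only difference in emphasis is that the paper singles out the fact that if $S$ and $T$ are semigroups with $S\times T$ a monoid then $S$ and $T$ are themselves monoids, whereas you instead highlight that a semigroup isomorphism between monoids is automatically a monoid isomorphism; these are complementary observations serving the same end.
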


\begin{proof}[Sketch of proof]
The direct product $S\times T$ of two objects in the category
of monoids clearly has the same semigroup
structure as their direct product in the category of semigroups.
Also, if $S$ and $T$ are semigroups such that
$S\times T$ is a monoid, it is easy to verify that
$S$ and $T$ must be monoids.
Hence a monoid $S$ is a direct factor of a monoid $U$ in the
category of semigroups if and only if it is
a direct factor of $U$ in the category of monoids.

From these observations, it is easy to deduce the first assertion
of this Proposition, and apply it to the monoid~$\N.$
\end{proof}

However, the referee has pointed out that the above property
of~$\N$ is a case of a much more general known result.

The context of that result concerns categories of algebras
whose operations include a binary operation denoted $+$
(but not required to be commutative or even associative) and
a zeroary operation $0,$ such that $\{0\}$ is a subalgebra,
and the identity $0+x=x=x+0$ holds \cite[start of \S5.4]{MMT}.
The result also involves the ``center'' of
such an algebra, a complicated concept developed
in \cite[\S5.5]{MMT}; but all we will need to know about it below is
that for all elements $b$ in the center, there exists an element
$c$ such that $b+c=0$ \cite[Definitions~5.11 and 5.12]{MMT}.
(Warning: the word ``center'' is used in different,
though overlapping ways in other chapters of \cite{MMT}.)
The result is
\begin{equation}\begin{minipage}[c]{35pc}\label{d.MMT_pm}
\cite[Exercise~5.15\,(10), p.\,300]{MMT} \ %
In the category of all algebras of a given type with $0$ and $+$
as above, every nontrivial object with finite center that is
not a direct product of two nontrivial objects is prime.
\end{minipage}\end{equation}

Note that primeness in such a category implies primeness
in any subvariety, and that a monoid with no nontrivial
invertible elements (or more generally, with only finitely
many) necessarily satisfies the ``finite center''
condition of~\eqref{d.MMT_pm}.

Some examples in the variety of monoids that~\eqref{d.MMT_pm}
shows to be prime include: (i)~all nontrivial
submonoids of $\N,$ (ii)~all submonoids $S$ of $\N\times\N$
such that the submonoids $S\cap(\N\times\{0\})$ and
$S\cap(\{0\}\times\N)$ are both nontrivial, and $S$ is strictly
larger than their sum (e.g., $\{(a,\,b)\in\N\times\N\ |
\ a\equiv b\ (\mathrm{mod}~n)\}$ for any $n>1;$
or $\N\times\N\setminus\{(1,0)\}),$
(iii)~all submonoids $S$ of $\N\times\N$ such that
$S\cap(\N\times\{0\})=\{(0,0)\},$ while
$S\cap(\N\times\{1\})$ and $S\cap(\{0\}\times\N)$ each have more
than one element, and (iv)~all {\em free} monoids
on more than one (possibly infinitely many) generators.
(That none of these examples is a nontrivial directly product takes
a bit of thought in case~(ii) and more in case~(iii).
I leave these as exercises for the reader.)

If one is interested in monoids including nontrivial invertible
elements, it is worth noting that the concept of ``center''
used in~\eqref{d.MMT_pm} has the property that every central
element of an algebra is central in the more familiar sense
that it commutes, under $+,$ with every element of the algebra
\cite[p.\,296, sentence containing first display]{MMT}.
This leads to still larger classes of objects with
finite centers in the categories of monoids and of groups.

On the other hand, we shall see in Example~\ref{E.Z} below
that the additive group of
integers is {\em not} prime (in the language of that
section, ``not Tarski-prime'') in the category of groups, from
which it easily follows that it is also non-prime in
the categories of semigroups and monoids considered above.

\section{Two other questions from~\cite{MMT}}\label{S.more_MMT_Qs}

The question answered in \S\ref{S.Main} above is the second
of four questions listed
in~\cite[middle of p.\,263]{MMT}.
It was noted in~\cite{YCor} that the last of those questions,
{\it ``Is $\Z$ prime in the class of all Abelian groups?''}, has
an easy affirmative answer.
So far as I know, the other two remain open.

One of them falls under the general theme of this note.
In it, an {\em idempotent} semigroup means one
satisfying the identity $x\,x=x.$
There is considerable literature on such semigroups,
e.g.~\cite[\S4.4]{JH} and~\cite{JAG}.

\begin{question}\label{Q.idpt}
\cite[p.\,263]{MMT}
Is there any prime in the class of all idempotent semigroups?
\end{question}

The other question has a less obvious connection with this note.
In it, an algebra ``of type $<\!1,1\!>$\!'' means
an algebra having precisely two operations, both unary.

\begin{question}\label{Q.<1,1>}
\cite[p.\,263]{MMT}
Is there any prime in the class of all finite algebras of type
$<\!1,1\!>$ {\rm(}or in any type that has more than one operation of
arity~$\geq 1)$?
\end{question}

An algebra of type $<\!1,1\!>$ can be thought of as a set given with
an action on it of the free monoid on two generators;
so in that way the question has a somewhat similar flavor to those
studied in the preceding sections.

\section{Some results and examples for groups, comparing \\
Tarski-primeness and the two sorts of Rhodes-primeness}\label{S.egg}

We mentioned in \S\ref{S.termin} that two concepts called primeness,
somewhat related to but
distinct from Tarski's, are studied by semigroup theorists.
In this section, we examine the relationships
among these three sorts of primeness when applied to groups.
We give them distinct names in Convention~\ref{Cv.Rh&Ta}(iii)-(v)
below.

This section can be read independent of the preceding material
(ignoring a couple of brief comments about
the contrast between Example~\ref{E.Z}
below and Theorems~\ref{T.cancellative} and~\ref{T.monoids&} above,
and, much later, some of the remarks between
Questions~\ref{Q.fin_vs_inf} and~\ref{Q.smgp,mnd,gp}).

\begin{convention}\label{Cv.Rh&Ta}
In this section, for objects $G,$ $H,$ etc.\ of the
category of all groups, or the category of finite groups,\\[.5em]
{\rm(i)}\,\,\,\, we shall call $H$ a {\em direct factor} of $G$
if $G$ is isomorphic to a direct product $H\times H',$\\[.5em]
{\rm(ii)}\,\, we shall call $H$ a {\em subquotient} of $G$ if $H$
is isomorphic to a homomorphic image of a subgroup of~$G.$\\[-.6em]

Moreover, for $G$ nontrivial,\\[.5em]
{\rm(iii)} we shall call $G$ {\em Tarski-prime} if, whenever
$G$ is a direct factor of a group in the category in
question, and the latter group can also be written
as a direct product $G_0\times G_1,$
then $G$ is a direct factor of one of $G_0$ or~$G_1,$\\[.5em]
{\rm(iv)}\, we shall call $G$ {\em Rhodes-prime with respect
to direct products} if, whenever $G$ is a {\em subquotient}
of a group in the category in question, and the latter group
can be written as a direct product $G_0\times G_1,$
then $G$ is a subquotient of one of $G_0$ or~$G_1$
{\rm(}condition introduced at \cite[p.\,482, line~6]{RS}{\rm);}
and similarly,\\[.5em]
{\rm(v)} we shall call $G$ {\em Rhodes-prime with respect
to semidirect products} if, whenever $G$ is a subquotient
of a group in the category in question, and the latter group
can be written as a {\em semidirect} product $G_0 \rtimes\,G_1,$
then $G$ is a subquotient of one of $G_0$ or~$G_1$
{\rm(}condition introduced at \cite[p.\,228, paragraph~before
Lemma~4.1.31]{RS}{\rm)}.
\end{convention}

(What we name a {\em subquotient} of $G$ in~(ii) above was earlier
also called a {\em factor} of $G,$ e.g.,~\cite[p.\,1 et seq.]{HN},
is called a {\em divisor} of $G$ in~\cite[Def.~1.2.25, p.\,32]{RS},
and is now often called a {\em section} of $G$ by group theorists.
But ``subquotient'' is also fairly common.)

We will also follow the common conventions of
generally writing $e$ for the
identity elements of groups, though for particular groups we may use
notation appropriate for them, e.g., $0$ for
the identity element of the additive group $\Z;$ and of
calling a group $G$ {\em nontrivial} if it has more than one element.
(We use ``nontrivial'' in that sense
in the line introducing~(iii)-(v) above.
In~\cite{RS} the trivial semigroup {\em is} counted as prime
with respect to direct and semidirect products,
but here we have excluded that group to make the comparison
between the Rhodes-primeness conditions
and that of Tarski-primeness more straightforward.)

In view of Theorems~\ref{T.cancellative} and~\ref{T.monoids&}
of the preceding section,
one might expect the additive group $\Z$ to be Tarski-prime
in the category of all groups.
But a key tool in proving those results was the fact that
if two members of $\N$ satisfy $m+n=1$ \eqref{d.m+n=1},
one of them has to be~$0.$
The failure of that statement in~$\Z$ underlies the difference
in its behavior.

\begin{example}\label{E.Z}
In the category of groups, $\Z$ is not Tarski-prime,
but {\em is} Rhodes-prime with respect to semidirect and
direct products.
\end{example}

\begin{proof}
To construct a counterexample to Tarski-primeness, let us choose any
\begin{equation}\begin{minipage}[c]{35pc}\label{d.p,q}
prime numbers $p$ and $q,$ and nonunit divisors $a\,|\,p\,{-}1$
and $b\,|\,q\,{-}1,$ such that $a$ and $b$ are relatively prime.
\end{minipage}\end{equation}
(E.g., $p=3,$ $q=7,$ $a=2,$ $b=3.$
Or one could take both $p$ and $q$ to be $7,$
again with $a=2$ and $b=3.)$

Recalling that the multiplicative groups of the rings
$\Z_p$ and $\Z_q$ are cyclic of orders $p-1$ and $q-1,$ we can
\begin{equation}\begin{minipage}[c]{35pc}\label{d.G}
let $G=(\Z_p\rtimes\Z)\times(\Z_q\rtimes\Z)$ where, in the first
factor, a generator of $\Z$ acts on $\Z_p$ by multiplication by
a unit of the ring $\Z_p$ having multiplicative order $a,$
and in the second, such a generator acts on $\Z_q$ by multiplication by
a unit of the ring $\Z_q$ having multiplicative order $b.$
\end{minipage}\end{equation}

Note that neither of the groups $\Z_p\rtimes\Z$ and $\Z_q\rtimes\Z$
has a direct product decomposition with a factor isomorphic to $\Z.$
E.g., if $\Z_p\rtimes\Z$ did, then a generator of that
factor would be a central element of infinite order, hence
would have, under the given description of the group,
the form $(0,\,ma)$ $(m\neq 0).$
But that would make it an $\!a\!$-th power in the whole
group, while looking at the $\!\Z\!$-component in such a direct
product decomposition, we see that a generator thereof can't be an
$\!a\!$-th power for any $a>1.$
(The groups $\Z_p\rtimes\Z$ and $\Z_q\rtimes\Z$ are
in fact directly indecomposable, but the above argument
gives as much as we need.)

On the other hand, suppose we
rewrite $G$ as $(\Z_p\times\Z_q)\rtimes(\Z\times\Z),$
using the action of $\Z\times\Z$ under which the first factor $\Z$
affects only the first factor of $\Z_p\times\Z_q,$ and the second
affects only the second.

Since $a$ and $b$ are relatively prime,
the subgroup of $\Z\times\Z$ generated by the element $(a,b)$
is a direct factor thereof.
Explicitly, we can find integers $c$ and $d$ such that $bc+ad=1,$
and use the right action of the invertible matrix
$(\begin{smallmatrix} \ b & d \\ \!\!-a & c \end{smallmatrix})$
to get an isomorphism $\Z\times\Z\cong\Z\times\Z$ such that
the image of $(a,b)$ generates the second factor.
(That matrix or its inverse necessarily has
some negative entries, which can be thought of as the way
the fact that $\Z$ includes negative integers comes in.)
We see from~\eqref{d.G} that under the action
on $\Z_p\times\Z_q,$ that second factor acts trivially.

Thus, $G$ can be written as a semidirect product
$G=(\Z_p\times\Z_q)\rtimes(\Z\times\Z)$ where the second
$\Z$ acts trivially,
and all the action comes from the first.
Hence we can rewrite this group as a {\em direct} product
$G=((\Z_p\times \Z_q)\rtimes\Z)\times\Z.$
So the group $\Z$ is isomorphic to a direct factor of the
product group~\eqref{d.G}, though it is
not a direct factor in either of the given factors.
So $\Z$ is not Tarski-prime.

As to the Rhodes-primeness conditions, it is clear
that $\Z$ is a subquotient
of a group $G$ if and only if $G$ has an element of infinite order.
It is also easy to see that a direct or semidirect
product $G_0\times G_1$ or $G_0\rtimes G_1$ has an element
if infinite order if and only if $G_0$ or $G_1$ does.
So $\Z$ satisfies both versions of Rhodes-primeness.
\end{proof}

For finite groups, the relationship between these
primeness conditions is quite different.
A key result is the {\em Krull-Schmidt Theorem}
\cite[Theorem~3.8, p.\,86]{TWH}, which says that every
group with ACC and DCC on normal subgroups has a factorization
as a direct product of finitely many nontrivial groups which are not
themselves nontrivial direct products, and that the factors in such
a decomposition are {\em unique} up to rearrangement and isomorphism.
Thus, every group with those chain conditions which is {\em not} a
nontrivial direct product is
Tarski-prime in the category of such groups.
In particular, every nontrivial {\em finite} group which is not
a nontrivial direct product is Tarski-prime in the
category of finite groups.
(For a result about finite algebras of
much more general sorts than groups, of which this is a special case,
see~\eqref{d.MMT_pm} above,
together with the paragraph that precedes that display,
and the second paragraph following it.)

Recall that if a group $G$ has a {\em unique minimal}
nontrivial normal subgroup $M,$ then $G$ is called
{\em monolithic}, and $M$ is called the {\em monolith} of~$G$
\cite[sentence beginning at bottom of p.\,146]{HN}.
We can now prove

\begin{lemma}\label{L.=>=>}
In the category of {\em finite groups,} for $G$ a nontrivial group
we have the implications
\begin{equation}\begin{minipage}[c]{35pc}\label{d.=>=>}
\hspace{2.4em}$G$ is Rhodes-prime with respect to semidirect products\\
$\implies$ $G$ is Rhodes-prime with respect to direct products\\
$\implies$ $G$ is monolithic\\
$\implies$ $G$ is Tarski-prime.
\end{minipage}\end{equation}
\end{lemma}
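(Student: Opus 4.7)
The plan is to prove the three implications in sequence: the first is essentially tautological, the second a straightforward cardinality contradiction, and the third an invocation of the Krull--Schmidt theorem already cited in the text.

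\textbf{First implication.} Any direct product $G_0\times G_1$ is a semidirect product $G_0\rtimes G_1$ with the trivial action. So if $G$ is a subquotient of a finite $H = G_0\times G_1$, then viewing $H$ as $G_0\rtimes G_1$ and applying Rhodes-primeness with respect to semidirect products yields $G$ as a subquotient of $G_0$ or of~$G_1$.

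\textbf{Second implication.} I shall argue the contrapositive: if $G$ is not monolithic, I exhibit a failure of Rhodes-primeness with respect to direct products. Suppose $G$ has two distinct minimal nontrivial normal subgroups $M_1, M_2$. Then $M_1\cap M_2$ is a normal subgroup of $G$ contained in each $M_i$; were it nontrivial, minimality would force $M_1 = M_1\cap M_2 = M_2$, contradicting $M_1\ne M_2$. Hence $M_1\cap M_2 = \{e\}$, so the natural map $G\to G/M_1\times G/M_2$ is injective, exhibiting $G$ as a subquotient of the direct product $G/M_1\times G/M_2$. If $G$ were Rhodes-prime with respect to direct products, it would be a subquotient of $G/M_1$ or of $G/M_2$; but in finite groups a subquotient has order at most that of the ambient group, while $|G/M_i|<|G|$, a contradiction.

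\textbf{Third implication.} I shall first note that a monolithic group is directly indecomposable: in any decomposition $G\cong A\times B$ with $A,B$ nontrivial, pushing a minimal nontrivial normal subgroup of $A$ into $G$ via the first direct-summand embedding, and doing the same for $B$, produces two distinct minimal nontrivial normal subgroups of $G$. Now assume $G$ is monolithic and is a direct factor of a finite group $H$ with $H = H_0\times H_1$. Decomposing each $H_\ell$ into directly indecomposable factors $A_1,\ldots,A_m$ and $B_1,\ldots,B_n$ and concatenating gives one Krull--Schmidt decomposition of $H$. Writing $H\cong G\times G'$ and decomposing $G'$ similarly gives a second such decomposition, in which (by indecomposability of $G$) the group $G$ itself appears as a factor. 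Krull--Schmidt \cite[Theorem~3.8, p.\,86]{TWH} then implies $G\cong A_i$ or $G\cong B_j$ for some index, and in either case $G$ is a direct factor of the corresponding $H_\ell$.

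The only nonelementary input is Krull--Schmidt in the last step; the main conceptual subtlety there is the preliminary observation that monolithic implies directly indecomposable, without which $G$ might merely correspond to a combination of indecomposable factors spread across both $H_0$ and $H_1$, and the desired conclusion would not follow.
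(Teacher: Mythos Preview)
Your proof is correct and matches the paper's approach in all three implications: the first is dismissed as immediate, the second is the same contrapositive via the embedding $G\hookrightarrow G/M_1\times G/M_2$ and a cardinality count, and the third combines ``monolithic $\Rightarrow$ indecomposable'' with Krull--Schmidt. The only cosmetic difference is that you spell out explicitly how Krull--Schmidt yields Tarski-primeness of an indecomposable finite group, whereas the paper invokes this as a fact already noted in the paragraph preceding the lemma.
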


\begin{proof}
The first implication is clear from the definitions, since
direct products are a special case of semidirect products.

We shall prove the remaining two implications by contradiction.
For the middle implication, suppose $G$ has two minimal
nontrivial normal subgroups $N\neq N'.$
Then $N\cap N'=\{e\},$
so the natural map $G\to (G/N)\times(G/N')$ is an embedding.
Thus $G$ is isomorphic to a subgroup of the
direct product $(G/N)\times(G/N'),$ making it a subquotient thereof;
but it is not a subquotient of
either $G/N$ or $G/N',$ since these have smaller orders than~$G.$
So $G$ is not Rhodes-prime with respect to direct products.

For the final implication, note that by the Krull-Schmidt
Theorem, every non-Tarski-prime nontrivial group $G$ is a direct
product $G_0\times G_1$ of nontrivial groups, and if we choose minimal
nontrivial normal subgroups $N_0\subseteq G_0$ and $N_1\subseteq G_1,$
then $N_0\times\{e\}$ and $\{e\}\times N_1$ are distinct
minimal nontrivial normal subgroups of $G,$ so $G$ is not monolithic.
\end{proof}

Now for some positive results on when
finite groups are Rhodes-prime, in each of the two senses; though
our first result will not be limited to finite groups:

\begin{proposition}\label{P.x|-prime}
Every nontrivial {\em simple} group $G$ {\rm(}commutative
or noncommutative{\rm)} is Rhodes-prime with
respect to semidirect products in the category of all groups.

Hence every nontrivial {\em finite} simple group $G$ is Rhodes-prime
with respect to semidirect products in the category of finite groups.
\end{proposition}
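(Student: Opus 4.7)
The plan is to exploit simplicity of $G$ against the canonical normal subgroup $G_0$ of $G_0 \rtimes G_1$. Suppose $G$ is a subquotient of $G_0 \rtimes G_1$, realized as $G \cong H/N$ with $H \leq G_0 \rtimes G_1$ and $N \trianglelefteq H$. Set $K = H \cap G_0$; since $G_0$ is normal in $G_0 \rtimes G_1$, the subgroup $K$ is normal in $H$, so the product $KN$ is a normal subgroup of $H$ containing $N$, and $KN/N$ is a normal subgroup of $H/N \cong G$. Simplicity of $G$ then forces $KN/N = \{e\}$ or $KN/N = G$.

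In the first case $K \leq N$. Writing $p \colon G_0 \rtimes G_1 \twoheadrightarrow G_1$ for the canonical projection, we get a surjection $p|_H \colon H \twoheadrightarrow p(H) \leq G_1$ with kernel $K \leq N$. Hence $p(N)$ is normal in $p(H)$ and $p(H)/p(N) \cong H/N \cong G$ by the standard isomorphism theorems, exhibiting $G$ as a subquotient of $G_1$. In the second case $KN = H$, and the second isomorphism theorem gives $G \cong H/N = KN/N \cong K/(K \cap N)$, which displays $G$ as a subquotient of $G_0$ since $K \leq G_0$. Either way, the condition of Convention~\ref{Cv.Rh&Ta}(v) is satisfied.

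The ``hence'' statement for finite groups is then immediate: if $G_0 \rtimes G_1$ is finite, the witnessing subgroup of $G_0$ or $G_1$ produced above is finite, so the resulting subquotient lies in the category of finite groups. The one mildly delicate point is to confirm that $KN$ is a normal subgroup of $H$; this uses precisely that $G_0$ is normal in $G_0 \rtimes G_1$ (and that $N$ is normal in $H$ by hypothesis), which is where the argument makes essential use of the semidirect-product structure rather than merely the subgroup structure of the ambient group.
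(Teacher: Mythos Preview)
Your proof is correct and follows essentially the same approach as the paper's. The paper phrases the subquotient as a surjection $f:H\to G$ rather than a quotient $H/N$, and looks at the normal subgroup $f(H\cap G_0)$ of $G$ rather than your $KN/N$; but with $N=\ker f$ these are the same object, and the resulting case split and conclusions match yours line for line.
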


\begin{proof}
Let $G$ be a nontrivial simple group, and suppose
$G$ is a subquotient of a semidirect product $G_0\rtimes\,G_1.$
That is, suppose that that we can write it as $f(H)$ for some subgroup
$H<G_0\rtimes\,G_1,$ and some surjective homomorphism $f:H\to G.$

Since $G_0\rtimes\{e\}$ is a normal subgroup of $G_0\rtimes G_1,$
$H\cap(G_0\rtimes\{e\})$ will be a normal subgroup of $H,$
so its image in $G$ under $f$ will be normal; so
since $G$ is simple, that image will either be $G$ or $\{e\}.$

If $f(H\cap(G_0\rtimes\{e\}))=G,$ that makes $G$ a homomorphic
image of a subgroup of~$G_0,$ i.e., a subquotient of $G_0.$

On the other hand, if $f(H\cap(G_0\rtimes\{e\}))=\{e\},$
we see that the image in $G$ of an element
$(g_0,g_1)\in H$ depends only on $g_1,$ and easily deduce that
$G$ is a homomorphic image of the subgroup of $G_1$ consisting
elements occurring as second components of members of $H;$
so $G$ is a subquotient of $G_1.$

This proves the Rhodes-primeness of $G$ with
respect to semidirect products.
If $G$ is finite, having that property in the category of
all groups clearly implies that it has the same property in the
category of finite groups.
\end{proof}

As noted in Example~\ref{E.Z}, the converse to
Proposition~\ref{P.x|-prime} fails in the category of all groups:
the non-simple group $\Z$ is Rhodes-prime with
respect to semidirect products.
But Alexander Olshanskiy (personal communication) has provided
a proof that
that converse does hold in the category of finite groups.
Here is a simplified version of his proof.

\begin{proposition}[A.\,Olshanskiy]\label{P.AO_simple}
If a nontrivial finite group $G$ is Rhodes-prime with
respect to semidirect products in the category of
finite groups, then $G$ is simple.
\end{proposition}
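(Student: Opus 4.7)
I would prove the contrapositive: assuming $G$ is a nontrivial finite group that is not simple, I shall exhibit a finite semidirect product $A\rtimes B$ of which $G$ is a subquotient but with $G$ a subquotient of neither $A$ nor $B$. If $G$ is already a nontrivial direct product $G=G_0\times G_1$, then the trivial semidirect product with $A=G_0$, $B=G_1$ suffices, since $|G_i|<|G|$ forces $G$ not to be a subquotient of either factor.

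So assume $G$ is directly indecomposable. Pick a minimal normal subgroup $N\triangleleft G$. Since $G$ is neither simple nor a direct product, $N$ is a proper subgroup, and $Q:=G/N$ is a nontrivial finite group. By the Kaloujnine--Krasner embedding theorem, the extension $1\to N\to G\to Q\to 1$ embeds $G$ into the wreath product $W=N\wr Q=N^{|Q|}\rtimes Q$, so $G$ is a subquotient of the semidirect product $N^{|Q|}\rtimes Q$. Since $|Q|<|G|$, $G$ is not a subquotient of $Q$, reducing the task to showing that $G$ is not a subquotient of $A:=N^{|Q|}$.

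Being characteristically simple, $N$ is either elementary abelian (some $C_p^s$) or $T^r$ for a nonabelian finite simple $T$. In the elementary abelian case, $A$ is also elementary abelian of exponent $p$; if $G$ is nonabelian this rules out $G$ as a subquotient of $A$ at once, while if $G$ is abelian then direct indecomposability and nonsimplicity force $G\cong C_{p^k}$ with $k\geq 2$, whose exponent $p^k$ exceeds that of $A$.

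The main obstacle is the case $N=T^r$ with $T$ nonabelian simple, where I must show $G$ does not lie in the pseudovariety $\langle T\rangle$ of finite subquotients of direct powers of $T$. The key technical step is: if $G=H/K$ with $H\leq T^m$ and $N\triangleleft G$ with $N\cong T^r$, then the conjugation action of $G$ on $N$ factors through $\mathrm{Inn}(N)$. This rests on a Goursat-style analysis (subdirect subgroups of $T^m$ are direct products of diagonal copies of $T$) together with the fact that conjugation in $T^m$ is coordinate-wise inner, so it induces only inner automorphisms on any quotient $M/K\cong T^r$ of a normal subgroup $M\leq T^m$. Granting this, $G/C_G(N)\cong\mathrm{Inn}(N)=N$, and $N\cap C_G(N)=Z(N)=\{e\}$ together yield $G=N\times C_G(N)$; combined with direct indecomposability and $N\lneq G$, this gives the desired contradiction.
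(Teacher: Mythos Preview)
Your setup coincides with the paper's: both use the Kaloujnine--Krasner embedding $G\hookrightarrow N\wr(G/N)=N^{|G/N|}\rtimes(G/N)$ and dispose of the $G/N$ factor by cardinality. The divergence is at the next step, and there the paper's route is both simpler and complete, while yours has a genuine gap.

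The paper does \emph{not} try to show directly that $G$ is not a subquotient of $N^{|G/N|}$. Instead it argues forward: assuming $G$ is Rhodes-prime with respect to semidirect products, it \emph{is} a subquotient of $N^{|G/N|}$; since $N$ is characteristically simple, $N^{|G/N|}\cong S^k$ for a simple group $S$; then Rhodes-primeness with respect to \emph{direct} products (an immediate consequence of the semidirect hypothesis) is applied inductively to the factors of $S^k$, forcing $G$ to be a subquotient of $S$ itself. Since $S$ is a direct factor of $N\leq G$, $|S|\leq|G|$, so $G\cong S$ is simple. The hypothesis is used twice, and that second use is what makes the argument short.

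Your contrapositive approach forfeits that second use of the hypothesis and must instead prove outright that a directly indecomposable, non-simple $G$ with minimal normal $N\cong T^r$ is never a subquotient of $T^m$. Your ``key technical step'' is the claim that if $G=H/K$ with $H\leq T^m$ and $N\cong T^r$ normal in $G$, then $G$ acts on $N$ by inner automorphisms. Your justification --- ``conjugation in $T^m$ is coordinate-wise inner, so it induces only inner automorphisms on any quotient $M/K\cong T^r$ of a normal subgroup $M\leq T^m$'' --- conflates $M\triangleleft H$ with $M\triangleleft T^m$; the preimage $M$ is only normal in $H$. The Goursat description of subdirect subgroups of $T^m$ as products of diagonals applies only to \emph{subdirect} subgroups, whereas $H$ need not be subdirect (its coordinate projections may be arbitrary proper subgroups of $T$), and passing to the quotient $M/K$ introduces further complications when $T$ has proper nontrivial perfect subgroups. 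The claim can be salvaged when $T$ is a minimal simple group (e.g.\ $T=A_5$, whose only perfect subgroups are $\{e\}$ and $A_5$), by replacing $M$ with its perfect core and then applying the subdirect analysis; but for general $T$ you have not supplied an argument, and it is not clear one exists along these lines. The paper's approach sidesteps all of this.
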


\begin{proof}
Let $N$ be a minimal nontrivial normal subgroup of~$G.$
(By Lemma~\ref{L.=>=>} there is a
unique such $N,$ but we will not need to call on that fact.)

Being an extension of $N$ by $G/N,$
the group $G$ can be written as a subgroup of the wreath
product $N\wr G/N,$ that is, of the semidirect product
$N^{|G/N|}\rtimes G/N,$ where $|G/N|$ denotes the
underlying set of $G/N,$ and $G/N$ acts on that direct power
of $N$ by permutation of the factors
(Theorem of Kaloujnine and Krasner, \cite[Theorem~22.21, p.\,46]{HN}).
Hence by Rhodes-primeness with respect to semidirect products,
$G$ must be a subquotient of $G/N$ or of~$N^{|G/N|}.$
The former is impossible because $G/N$ has smaller
order than $G,$ so $G$ must be a subquotient of~$N^{|G/N|}.$

Though $N$ is a minimal nontrivial normal subgroup of~$G,$
it need not be simple, but
it will be {\em characteristically simple,} i.e.,
it will have no proper nontrivial subgroup invariant
under all its automorphisms (since these include conjugation
by all members of $G).$
Hence by~\cite[3.3.15,~p.\,87, and sentence at end of
proof,~p.\,88]{DJRS},
$N$ is a direct product of (mutually isomorphic) simple groups.
Hence $N^{|G/N|}$ is a direct product of those same simple groups,
with possibly more repetitions.

Thus $G,$ being Rhodes-prime with respect to semidirect products,
and hence in particular, with respect to direct products, must be
a homomorphic image of a subgroup one of those simple groups,~$S.$
(Our definition of that Rhodes-primeness condition only
refers to pairwise products, but by induction, it
extends to arbitrary finite products.)
But since $S$ is a direct factor of the subgroup~$N$
of~$G,$ its order is less than or equal to that of~$G,$
so to be a homomorphic image of a subgroup of~$S,$ the group~$G$
must, in fact, be isomorphic to $S,$ hence, as desired, simple.
\end{proof}

The next result gives some sufficient conditions for $G$ to be
Rhodes-prime with respect to {\em direct} products.
In view of the second implication of~\eqref{d.=>=>}, $G$
must be monolithic.
Given this, we find that some quite varied additional
conditions imply the desired Rhodes-primeness.
(The referee points out that case~(i) below is
\mbox{\cite[Theorem~10.1]{RF+RMcK}} applied to the variety of groups.)

\begin{proposition}\label{P.x-prime}
Let $G$ be a monolithic finite group, with monolith~$M.$

Then $G$ is Rhodes-prime with respect to direct products
in the category of finite groups if any
of the following conditions holds:

\hspace{.5em}{\rm(i)} $M$ is noncommutative, or

\hspace{.25em}{\rm(ii)} $G$ is a semidirect product
$M\rtimes K$ of $M$ with a subgroup $K<G,$ or

{\rm(iii)} $G$ is a cyclic group of prime-power order, $\Z_{p^n}.$
\end{proposition}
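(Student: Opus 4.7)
All three cases share a common setup. Suppose $G$ is a subquotient of $G_0\times G_1$ via $H\le G_0\times G_1$ and a surjection $f\colon H\twoheadrightarrow G$. Let $K_i=H\cap(G_i\text{-factor})\trianglelefteq H$ and $N_i=f(K_i)\trianglelefteq G$; since $K_0$ and $K_1$ commute elementwise in $H$, so do $N_0$ and $N_1$ in $G$. If $N_0=\{e\}$ then $K_0\le\ker f$, so $f$ factors through $H/K_0\cong\pi_1(H)\le G_1$, exhibiting $G$ as a subquotient of $G_1$; similarly if $N_1=\{e\}$. Hence in each case it suffices to address the remaining situation where both $N_0,N_1$ are nontrivial, in which case monolithicity forces $M\le N_0\cap N_1$.

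For case~(i) this situation is immediately impossible: $\{e\}\ne[M,M]\le[N_0,N_1]=\{e\}$. For case~(iii) I would argue independently of the $N_i$: a finite group $X$ has $\Z_{p^n}$ as a subquotient if and only if some element of $X$ has order divisible by $p^n$ (one direction: lift a generator of the quotient; the other: take the cyclic subgroup and map it onto its unique order-$p^n$ quotient), equivalently iff $p^n\mid\exp X$. Since $\exp(G_0\times G_1)=\operatorname{lcm}(\exp G_0,\exp G_1)$ and $p^n$ is a prime power, this lcm is divisible by $p^n$ iff one of $\exp G_0,\exp G_1$ is.

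For case~(ii), assume $M$ abelian (else case~(i) applies), so $M\cong\Z_p^k$ with $K$ acting irreducibly. A preliminary step: $C_K(M)$ is normal in $K$ and is centralized by the abelian normal subgroup $M$, hence $C_K(M)\trianglelefteq G$; since $C_K(M)\cap M\le K\cap M=\{e\}$, monolithicity forces $C_K(M)=\{e\}$, so $K$ acts faithfully and $C_G(M)=M$. Thus $N_0,N_1\le M$, forcing $N_0=N_1=M$. After quotienting each $G_i$ by the normal subgroup $K_i\cap\ker f$ we may assume $f|_{K_i}\colon K_i\xrightarrow{\sim}M$, and after further replacing $(H,f,G_0,G_1)$ by $(K_0T,\,f|_{K_0T},\,\pi_0(T),\,\pi_1(T))$---where $T=f^{-1}(K)\le H$---we may assume that $H$ is subdirect with common Goursat quotient equal to $K$ itself, so that $G_0\cong G_1\cong T$ and $|G_i|=|G|$. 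In this tight setup one checks that $T\cap K_i=\{e\}$ (any such element would map into $M\cap K=\{e\}$ while $f|_{K_i}$ is injective), forcing $\pi_0|_T$ injective and hence $\pi_0(T)=G_0$; moreover $\ker f$ sits in $T$ as the antidiagonal copy of $M$ inside $K_0\times K_1$, with $\pi_0(\ker f)=K_0$. Thus $K_0\le G_0$ with $G_0/K_0\cong T/\ker f\cong K$, and transporting conjugation through $f$ shows that the induced $K$-action on $M\cong K_0$ matches the action in $G$. So $G_0\cong T$ is an extension of $K$ by $M$ with the correct action, and if this extension splits then $G_0\cong G$ and case~(ii) is done.

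The main obstacle is forcing that splitting. The tight surjection $f$ identifies $G$ as the Baer sum of $G_0\cong T$ with $G_1\cong T$, so in $H^2(K,M)$ we have $[G]=2[T]$; since $G=M\rtimes K$ is split, $2[T]=0$. When $p$ is odd, $H^2(K,M)$ is $p$-primary, so multiplication by $2$ is invertible and $[T]=0$, whence $T\cong G$ as required. For $p=2$ the equation $2[T]=0$ does not immediately yield $[T]=0$, and I expect this to be where the real work lies---presumably by using the existence of the complement $K<G$ more directly, or by leveraging the irreducible faithful $K$-action on $M$ to exclude nontrivial $2$-torsion extension classes under these hypotheses.
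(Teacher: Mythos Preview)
Your treatments of cases (i) and (iii) are correct and match the paper's: for (i) the same contradiction $\{e\}\ne[M,M]\le[N_0,N_1]=\{e\}$, for (iii) the same element-order argument.

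For case (ii) your plan has the gap you yourself identify: the Baer-sum computation gives only $2[T]=0$ in $H^2(K,M)$, which does not force $[T]=0$ when $p=2$. This is a real obstruction in your approach, not a technicality, and there is no evident way to salvage the cohomological route under the stated hypotheses alone.

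The paper avoids extension theory entirely. It chooses $H$ of \emph{minimal order} among counterexamples (a reduction you do not make), obtains $\ker f\cap K_i=\{e\}$ from minimality, and then---using $C_K(M)=\{e\}$, which you also prove---shows $T=f^{-1}(K)$ meets each $K_i$ trivially: any element of $T\cap K_0$ would have $f$-image in $K$, would centralize $f(K_1)\supseteq M$ (since $K_0$ and $K_1$ commute elementwise), hence would lie in $C_K(M)=\{e\}$, hence would be trivial by injectivity of $f|_{K_0}$. So $T$ is a graph, and $\pi_0(T)\le G_0$ is a concrete subgroup that normalizes the copy $M_0\cong M$ inside $G_0$ and acts on it exactly as $K$ acts on $M$. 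The paper concludes that $M_0\cdot\pi_0(T)\le G_0$ is isomorphic to $G=M\rtimes K$. The point is that the complement to $M_0$ is exhibited \emph{as an actual subgroup} of $G_0$, so no splitting question---and hence no parity-of-$p$ issue---ever arises.

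You do reach $T\cap K_i=\{e\}$ yourself, but then pivot to treating $G_0$ as an abstract extension of $K$ by $M$ and asking whether it splits; that detour is exactly what introduces the difficulty. I would also flag that some intermediate claims in your reduction for case~(ii), notably the assertion $|G_i|=|G|$ after your second replacement, are not justified without some control on $|\ker f|$---which is what the paper's minimality hypothesis on $|H|$ is designed to provide.
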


\begin{proof}
Given a finite monolithic
group $G$ that is {\em not} Rhodes-prime with respect to
direct products, we shall show that neither~(i) nor~(ii) can hold.
The proof that~(iii) implies Rhodes-primeness will be more
straightforward.

To get the first two results, suppose $G$
is not Rhodes-prime with respect to direct products, i.e.,
can be written as a homomorphic image $f(H)$ of a subgroup $H$ of
a direct product $G_0\times G_1$ of finite groups, such that
$G$ is not a subquotient of $G_0$ or of $G_1.$
We can clearly replace $G_0$ and $G_1$ by
the subgroups given by the projections of $H$ onto those two factors,
so assume that those projections are
surjective; i.e., that $H$ is a subdirect product of $G_0$ and $G_1.$
Let us further assume that, for the given
group $G,$ the groups $G_0,$ $G_1,$ and $H$ are chosen so
as to minimize the order of $H.$

Now let
\begin{equation}\begin{minipage}[c]{35pc}\label{d.H12}
$H_0 = \{h\in G_0\,|\,(h,e)\in H\}$\hspace{.8em}and\hspace{.8em}\
$H_1 = \{h\in G_1\,|\,(e,h)\in H\}.$
\end{minipage}\end{equation}

If $H_0$ were trivial, then
no two elements of $H$ would fall together under the
projection $G_0\times G_1\to G_1,$ so $G_1$ would be
isomorphic to $H,$ so as $G$ is a homomorphic image
of $H,$ this would contradict the assumption that $G$
was not a subquotient of $G_1.$
So $H_0$ is nontrivial; and similarly $H_1.$
From our assumption that $H$ projects surjectively to each of
$G_0$ and $G_1$ it is also easy to see
that $H_0$ is normal in $G_0,$ and $H_1$ in $G_1.$
(E.g., given $h\in H_0$ and $g_0\in G_0,$ we can find $g_1\in G_1$
such that $(g_0,g_1)\in H;$ and conjugating $(h,e)$ by $(g_0,g_1),$ we
effectively conjugate $h$ by $g_0.)$
Now if $\mathrm{ker}(f)$ had
nontrivial intersection with the subgroup $H_0\times\{e\}$ of $H,$ then
since $H_0\times\{e\}$ and $\mathrm{ker}(f)$ are both normal in $H,$
their intersection would be normal, and dividing $G_0$ by the
projection of this intersection, we could decrease the order of $H.$
This, and the corresponding observation for $\{e\}\times H_1,$ give
\begin{equation}\begin{minipage}[c]{35pc}\label{d.triv_cap}
$\mathrm{ker}(f)$ has trivial intersections with $H_0\times\{e\}$
and with $\{e\}\times H_1.$
\end{minipage}\end{equation}

Now let $M_0\times\{e\}$ be a nontrivial subgroup of
$H_0\times\{e\}$ minimal for being normal in $H.$
Since $M_0\subseteq H_0,$~\eqref{d.triv_cap} tells us
that $M_0\times\{e\}$ maps one-to-one into $G,$ hence its
image is a minimal normal subgroup of $G;$ hence that image is~$M.$
Combining with the corresponding observation about
a minimal normal subgroup $\{e\}\times M_1$ of
$\{e\}\times H_1,$ we get
\begin{equation}\begin{minipage}[c]{35pc}\label{d.M1,M2}
$H_0\times \{e\}$ is monolithic with monolith $M_0\times\{e\},$
and $\{e\}\times H_1$ is monolithic with monolith
$\{e\}\times M_1,$ and both of these map
isomorphically to $M$ under $f.$
\end{minipage}\end{equation}

But $M_0\times\{e\}$ and $\{e\}\times M_1$
centralize one another in $H,$ so $M$ must be
self-centralizing in $G,$ i.e., commutative,
giving the desired contradiction to~(i).

To get the next assertion, suppose as in~(ii)
that $G$ is a semidirect product $M\rtimes K$ for some $K<G.$
Since $K$ has trivial intersection with the monolith $M$ of $G,$
\begin{equation}\begin{minipage}[c]{35pc}\label{d.K_has_no_nm}
$K$ contains no nontrivial normal subgroup of~$G.$
\end{minipage}\end{equation}
On the other hand, since $M$ is invariant under conjugation by
members of $K,$ the centralizer of $M$ in $K$ must also be invariant
under conjugation by members of $K;$ and by definition that
centralizer will be invariant under conjugation by members of $M;$
hence it is invariant under conjugation by
all members of $MK=G,$ i.e., it is normal in~$G.$
So by~\eqref{d.K_has_no_nm},
\begin{equation}\begin{minipage}[c]{35pc}\label{d.triv_cntrlzr}
The centralizer of $M$ in $K$ is trivial.
\end{minipage}\end{equation}

Returning to what we proved earlier about $H,$ note that
if $f^{-1}(K) < H$ had nontrivial intersection with
$H_0,$ then this would centralize $H_1,$ hence its image
in $G$ would be a subgroup of $K,$ nontrivial by~\eqref{d.triv_cap},
that centralized the image of $H_1,$ which we saw in~\eqref{d.M1,M2}
contains $M,$ contradicting~\eqref{d.triv_cntrlzr}.
So $f^{-1}(K)$ has trivial intersection with $H_0;$ and
similarly with $H_1.$
In view of~\eqref{d.H12}, this
forces $f^{-1}(K)$ to be the graph of an isomorphism between subgroups
$K_0 < G_0$ and $K_1 < G_1,$ each isomorphic to $K,$ and we see
that these act on $M_0$ and $M_1$ as $K$ acts on $M.$
Hence the
subgroup $M_0\,K_0 < G_0$ (and likewise $M_1\,K_1 < G_1)$ is isomorphic
to $MK = G,$ again contradicting our assumption that $G$ is not
a subquotient of $G_0$ or $G_1.$
This gives the desired contradiction to~(ii) for
$G$ not Rhodes-prime with respect to direct products.

To prove Rhodes-primeness with respect to direct products
in case~(iii), suppose a subgroup $H$ of a finite group $G_0\times G_1$
can be mapped surjectively to $G=\Z_{p^n}.$
Then an element mapping to a generator of $G$ must
have order divisible by $p^n.$
Since the order of an element of $G_0\times G_1$ is the least
common multiple of the orders of its components, one of those
components must have order divisible by $p^n,$
hence some power of that element will have order exactly $p^n,$
hence the subgroup of $G_0$ or $G_1$ that it generates will
be a subquotient of $G_0$ or~$G_1$ isomorphic to $\Z_{p^n}=G.$
\end{proof}

(I obtained case~(ii) of the above result in 2014, answering a
question posed by John Rhodes, personal correspondence.
That result is given in weakened form in
\cite[Theorem~4.20, p.\,1275]{LRS}.
For the meaning of
``\!$G$ is ji''
in that statement, see the last paragraph of
\cite[p.\,1252]{LRS}, in particular, the display.)

Let us note some examples of the distinction between
Rhodes-primeness with respect to semidirect and direct products.

\begin{example}\label{E.S3}
In the category of finite groups, the following groups
are Rhodes-prime with respect to direct
products but, not being simple, are not
Rhodes-prime with respect to semidirect products.

\,\,{\rm(i)}\, The permutation groups $S_n$ for all $n\geq 3.$

\,{\rm(ii)}\, All semidirect products $\Z_p\rtimes A$ where
$p$ is a prime and $A$ a nontrivial subgroup of
$\mathrm{Aut}(\Z_p)\cong\Z_{p-1}.$

{\rm(iii)} All groups $\Z_{p^n}$ for $p$ prime and $n>1.$
\end{example}

\begin{proof}
To show $S_n$ Rhodes-prime with respect to direct
products, we use Proposition~\ref{P.x-prime}(ii).
For all cases except $n=4,$ $S_n$ is monolithic with monolith $A_n,$
and is a semidirect product of $A_n$ with the order-$\!2\!$
subgroup generated by any transposition.
The group $S_4$ has a different monolith,
the Klein four-group~$V,$ consisting of the even permutations of
exponent~$2,$ and if we write $S_3$ for the subgroup of elements of
$S_4$ fixing some one of the four elements on which $S_4$
acts, we find that $S_4$ is
a semidirect product $V\rtimes S_3,$ as required.
(For $n\geq 5,$ Rhodes-primeness of $S_n$
with respect to direct products is also an instance
of Proposition~\ref{P.x-prime}(i).)

In case~(ii) above, we again have a semidirect product decomposition.
To see that $\Z_p\rtimes A$ is monolithic with monolith $\Z_p,$
note that any $g\in\Z_p\rtimes A$ that is
not in $\Z_p$ acts nontrivially on $\Z_p,$
hence a commutator of $g$ with a nonidentity member of $\Z_p$ is a
nonidentity element of $\Z_p,$ so no
such $g$ can belong to a normal subgroup not containing~$\Z_p.$

In case~(iii), Rhodes-primeness with respect to direct products
is Proposition~\ref{P.x-prime}(iii).
\end{proof}

Each of the above examples shows that the first
implication of Lemma~\ref{L.=>=>} is not reversible.
I was unsure whether the second of those implications might be
reversible, but Alexander Olshanskiy provided Example~\ref{E.p^5} below,
showing that for every prime $p$ there is a group of
order $p^5$ for which that reverse implication fails,
and Example~\ref{E.Q8,D4},
showing that for $p=2,$ there are also two such examples of order~$p^3.$
Example~\ref{E.ZpZp^2} and the paragraph following it give
examples of order $p^3$ for all odd primes~$p.$
The referee observes that all these are what are called
``extraspecial $\!p$-groups'', namely, $\!p$-groups $G$
whose center is isomorphic to $\Z_p,$ and such that
the quotient of $G$ by its center has exponent $p$ \cite{wiki_extrsp}.

\begin{example}[A.\,Olshanskiy]\label{E.p^5}
Let $p$ be any prime, and $H$ the group {\rm(}of order $p^3)$
of upper triangular
$3\times 3$ matrices over $\Z_p$ with diagonal $I.$
Within $H,$ let $a=I+e_{23},$ $b=I+e_{12},$ $c=I+e_{13}.$
Let $G$ {\rm(}of order $p^5)$ be the quotient of $H\times H$ by the
subgroup generated by the central element $(c,\,c^{-1}).$

Then $G$ is monolithic, with monolith the subgroup $M$
generated by the image of $(c,I),$ which is also the image of $(I,c).$

By construction, $G$ is a quotient, hence a subquotient, of
$H\times H;$ but $G$ is not a subquotient of $H,$
since $H$ has smaller order.
Hence $G$ is not Rhodes-prime with respect to direct products
in the category of finite groups.
\end{example}

\begin{proof}
In $H,$ we find that $a^p=b^p=c^p=I,$ that $c$ is central,
and that $ba=abc.$
These properties allow us to write every element in the form
$a^i\,b^j\,c^k$ $(0\leq i,j,k<p),$ and since the total number of
such expressions is $p^3,$ the order of $H,$ this expression
for each element must be unique.
It is easy to verify that the subgroup generated by $c$ is
the commutator subgroup of $H,$ the center of $H,$
and a monolith in~$H.$

In $G,$ the subgroup $M$ described in the second
paragraph of the example is central, hence
normal, and has order $p,$ hence is minimal.
To see that it is the only minimal normal subgroup, note that
any $g\in G$ that is not in $M$ is the image of an
element $(h_0,h_1)\in H\times H$ such that at least
one of $h_0$ or $h_1$ involves a nonidentity power of $a$ or of $b.$
Assuming without loss of generality that $h_0$
involves a nonidentity power of $a,$ we find that the
commutator of $(h_0,h_1)$ with $(b,I)$ is a nonidentity
power of $(c,I).$
Hence in $G,$ the commutator of $g$ with
the image of $(b,I)$ is a generator of~$M.$
Thus, every normal subgroup of $G$ not contained in $M$ contains $M,$
so as $M$ is simple, $G$ is indeed monolithic with monolith~$M.$

The assertions of the final paragraph are immediate.
\end{proof}

\begin{example}[A.\,Olshanskiy]\label{E.Q8,D4}
Let us write $Q_8$ for the quaternion group
$\{\pm 1,\,\pm i,\,\pm j,\,\pm k\},$
and $D_4$ for the dihedral group {\rm(}the symmetry group of the square,
$<\!p,\,q\,\,|\,\,p^4=e,\,\,q^2=e,\,\,q\,p\,q^{-1}=p^{-1}\!\!>),$
both of order~$8.$

Then $Q_8$ is a subquotient of $D_4\times D_4,$
and $D_4$ a subquotient of $Q_8\times Q_8,$
but neither $Q_8$ nor $D_4$ is a subquotient of the other, hence
neither group is Rhodes-prime with respect to direct products
in the category of finite groups.

However, each is monolithic, with monoliths
$\{\pm 1\}\lhd Q_8$ and $\{e,\,p^2\}\lhd D_4$ respectively.
\end{example}

\begin{proof}
We shall show that the $\!16\!$-element group
\begin{equation}\begin{minipage}[c]{35pc}\label{d.16-elt}
$A\,=~<\!x,\,y\,\,|\,\,x^4=e,\,\,y^4=e,\,\,y\,x\,y^{-1}=x^{-1}\!>$
\end{minipage}\end{equation}
(a semidirect product
$<\!x\,|\,x^4=e\!>\rtimes<\!y\,\,|\,\,y^4=e\!>)$
is isomorphic both to a subgroup of $Q_8\times Q_8$
and to a subgroup of $D_4\times D_4,$ and
has both $Q_8$ and $D_4$ as homomorphic images,
from which the above subquotient assertions follow.

Within $Q_8\times Q_8,$ let $x=(i,1)$ and $y=(j,j).$
It is easy to check by looking at first coordinates,
and then at second coordinates, that
$x$ and $y$ satisfy the relations of~\eqref{d.16-elt},
hence the group they generate is a homomorphic
image of~$A.$
That group admits a homomorphism onto $Q_8,$ given
by projection to the first component;
but $x^2\,y^2=(1,-1)$ is in the kernel of that homomorphism,
hence the group generated by $x$ and $y$ must have
larger order than $Q_8;$ hence it can't be a {\em proper}
homomorphic image of the $\!16\!$-element
group ~$A,$ so it must be isomorphic thereto.

Likewise, within $D_4\times D_4,$
consider the subgroup generated by $x=(p,1)$ and $y=(q,p).$
As in the preceding paragraph, we verify that this
group is a homomorphic image of $A,$ and that
the projection onto the first component maps it surjectively to $D_4.$
However, $y^2=(e,p^2)$ is a nonidentity element
of the kernel of that homomorphism, so again the whole group is
of larger order than $Q_8,$ and so must be isomorphic to~$A.$

The final monolithicity assertions are easily verified.
\end{proof}

(We remark that $D_4$ is a semidirect
product, $<\!p\,\,|\,\,p^4=e\!>\,\rtimes<\!q\,\,|\,\,q^2=e\!>\!.$
However, $<\!p\,\,|\,\,p^4=e\!>$ is not the {\em least}
nontrivial normal subgroup $M$ of $D_4$ -- that is a subgroup
thereof -- so the above non-Rhodes-primeness result
does not contradict Proposition~\ref{P.x-prime}(ii).)

Here is one more example, which answers
a question raised by the referee:

\begin{example}\label{E.ZpZp^2}
For any odd prime $p,$ the monolithic group $G=\Z_{p^2}\rtimes\Z_p$
{\rm(}where a generator of $\Z_p$ acts on $\Z_{p^2}$ by
multiplication by $1\!+\!p,$ and the monolith is
the subgroup $p\,\Z_{p^2}\rtimes\{e\})$
is isomorphic to a subquotient of $\Z_{p^2}\times H$
where, as in Example~\ref{E.p^5}, $H$ is the group of upper triangular
$3\times 3$ matrices over $\Z_p$ with diagonal $I.$
Hence $G$ is not Rhodes-prime with respect to direct products
in the category of finite groups.
\end{example}

\begin{proof}[Sketch of proof]
In $\Z_{p^2}$ we will use additive notation,
writing $\Z_{p^2}=\{[0],[1],\dots,\,[p^2{-}1]\},$ while in $H$ we will
use the multiplicative notation of Example~\ref{E.p^5},
writing $H=\{a^i\,b^j\,c^k\ |\ 0\leq i,\,j,\,k <p\}$

Within $\Z_{p^2}\times H,$ take the subgroup of elements
such that the $\!\Z_{p^2}\!$-coordinate, and the exponent
of $a$ in the $\!H\!$-coordinate, agree modulo~$p;$
and divide this subgroup by the congruence equating the central
elements $([0],\,c)$ and $([p],\,I)$ (both of order $p).$

The resulting subquotient is a group of order $p^3$ in which the
image of $([1],\,b)\in\Z_{p^2}\times H$ generates
a subgroup of order $p^2,$ and the image of $([0],\,a)$
generates a subgroup of order $p,$ conjugation by which sends
the generator of the former subgroup to
its $\!1{+}p$-th power; so this subquotient is isomorphic to
$G=\Z_{p^2}\rtimes\Z_p.$
But $G$ is clearly not isomorphic to a subquotient
of either $\Z_{p^2}$ or $H.$
\end{proof}

Yet another example is the group denoted $H$ in
Examples~\ref{E.p^5} and~\ref{E.ZpZp^2} above.
That this monolithic group is not Rhodes-prime follows from
\cite[Theorem~5.2\,(ii), p.\,204]{DG}
(where $M$ denotes the group we are calling
$H,$ and $N$ the group called $G$ in Example~\ref{E.ZpZp^2} above).
Namely, the case $k=r=2$ of that statement says that the quotient
group of $G\times G$ obtained by identifying the monoliths
of the two factors is isomorphic to
the quotient group of $G\times H$ obtained in the same way.
Hence $H$ is a subquotient of $G\times G;$
but since $G$ and $H$ have the same order, $H$ is not
a subquotient of $G,$ proving it non-Rhodes-prime with
respect to direct products.

These examples lead one to wonder

\begin{question}\label{Q.p-gps}
For $p$ a prime, what finite $\!p\!$-groups {\em are} Rhodes-prime
with respect to direct products in the category of finite groups?
Are there any other than the cyclic $\!p\!$-groups $\Z_{p^n}$?
\end{question}

On the other hand, are there any finite monolithic groups not
Rhodes-prime with respect to direct products
that {\em aren't} $\!p\!$-groups?
Yes.
Modifying a construction suggested by the referee, let us consider
the variant of Example~\ref{E.p^5} gotten by taking $p>2,$
and letting $H$ consist of all upper triangular
$3\times 3$ matrices over $\Z_p$
in which $e_{11}$ and $e_{33}$ have coefficient~$1,$
but the coefficient of $e_{22}$ can be any nonzero member of $\Z_p;$
thus $|H|=(p{-}1)\,p^3.$
Obtain $G,$ as before, by dividing $H\times H$ by the normal
subgroup of elements $(I+fe_{1,3},\,I-fe_{1,3})$ for $f\in\Z_p;$
so $|G|=(p{-}1)^2\,p^5.$
The reader can verify that the image of the subgroup of
elements $(I+fe_{1,3},\,I)$ is again a monolith.
As before, though $G$ is a
subquotient of $H\times H,$ it is too large to be a subquotient of
$H,$ hence it is not Rhodes-prime with respect to direct products.
(One can generalize this construction in a way that allows
the monolith to have order $2$: Replace $\Z_p$ by any finite
field of cardinality $>2,$ in particular allowing finite
proper extensions of $\Z_2,$ and at the end of the construction,
divide the central subgroup of elements $(I+fe_{1,3},\,I+g\,e_{1,3})$
not just by the subgroup of elements $(I+fe_{1,3},\,I-fe_{1,3}),$ but by
a large enough overgroup thereof to shrink it down to a cyclic group,
so that it again becomes a monolith.)

Moving on, it is easy to give examples showing nonreversibility
of the last implication of Lemma~\ref{L.=>=>}.
E.g.:

\begin{example}\label{E.p,q1,q2}
Let $p$ be any prime, and $q_0,$ $q_1$ primes {\rm(}possibly
equal to one another{\rm)} that are both \mbox{$\equiv 1\pmod{p},$}
so that the automorphism groups of $\Z_{q_0}$ and $\Z_{q_1}$
both have orders divisible by $p.$
Thus, $\Z_p$ has faithful actions on both these groups.
Let $G=(\Z_{q_0}\times\Z_{q_1})\rtimes\Z_p,$ defined using
these actions.

Then $G$ is Tarski-prime in the category of finite groups, but
is not monolithic.
\end{example}

\begin{proof}
To show Tarski-primeness, note that
the order of $G,$ $p\,\,q_0\,q_1,$ is a product of just three
primes, hence if $G$ were a nontrivial direct product,
one of the factors would have prime
order, hence be commutative, and since it centralizes
the other factor, it would be central in $G.$
But $G$ has no nonidentity central elements; so
it is not such a product, so by the Krull-Schmidt Theorem,
$G$ is Tarski-prime in the category of finite groups.

But $G$ is not monolithic, since it has at least the two minimal
normal subgroups $\Z_{q_0}$ and $\Z_{q_1}.$
(These are its only minimal normal subgroups if $q_0\neq q_1.$
If $q_0=q_1,$ on the other hand, then $\Z_{q_0}\times\Z_{q_1}$ is a
$\!2\!$-dimensional $\Z_{q_0}\!$-vector space, and
has $q_0\!+\!1$ one-dimensional subspaces; and if we
give $\Z_p$ the same action on both of those
copies of $\Z_{q_0},$ then all $q_0\!+\!1$ of those subspaces
are minimal normal subgroups of~$G.)$
\end{proof}

The examples and proofs given above
make much use of subgroups,
but less of homomorphic images.
This led me to wonder:  Suppose we
call an object {\em modified} Rhodes-prime with
respect to direct  or semidirect products if it satisfies
condition~(iv) or~(v) of Convention~\ref{Cv.Rh&Ta} with
``subquotient'' replaced by ``subgroup''.
How do these conditions compare with the unmodified conditions?

These questions are easily answered for finite groups:

A finite group~$G$ is modified Rhodes-prime
with respect to direct products if and only if it is monolithic.
Indeed, if $G$ has a monolith~$M,$ then
for any embedding $G\to G_1\times G_2,$ the kernels of the induced maps
$G\to G_1,$ $G\to G_2,$ having trivial intersection,
cannot both contain~$M,$ so one of them must be trivial,
giving an embedding of $G$ in $G_1$ or $G_2,$
proving modified Rhodes-primeness.
On the other hand, any non-monolithic finite group $G$
has nontrivial normal subgroups
$N_1,\,N_2$ with trivial intersection, giving
an embedding $G\to G/N_1\times G/N_2,$
and looking at cardinalities, we see that $G$ cannot
be embedded in either $G/N_1$ or $G/N_2,$ so $G$
is not modified Rhodes-prime with respect to direct products.

On the other hand, we can see from the proofs of
Propositions~\ref{P.x|-prime} and~\ref{P.AO_simple} that modified
Rhodes-primeness of finite groups with respect to
{\em semidirect} products is equivalent to simplicity,
hence equivalent to ordinary Rhodes-primeness with respect to
semidirect products.

Note that the modified Rhodes-primeness conditions are
not, a priori, either stronger or weaker than the unmodified
conditions: \ %
The implication defining each of the modified conditions
has a strengthened hypothesis, that $G$ be isomorphic to a
{\em subgroup} of a group $G_0\times G_1$ or $G_0\rtimes G_1,$
not merely to a subquotient;
but it has a similarly strengthened conclusion.
For finite groups, the modified Rhodes-primeness conditions
turned out to be weaker than or equivalent to
the unmodified conditions, but for not-necessarily-finite
groups, or various classes thereof, the relationships
may be different.

If we look at the condition of modified Rhodes-primeness
with respect to direct products on
not necessarily finite groups, a notable class
of examples are the infinite abelian groups of prime exponent~$p.$
If such a group $G$ is embedded in a direct product $G_1\times G_2,$
then regarding $G$ as a $\!\Z_p\!$-vector space,
its images in $G_1$ and $G_2$ will
also be $\!\Z_p\!$-vector spaces, at least one of which must have
the same infinite dimension as~$G,$ hence $G$ can be embedded in
one of those groups (even if not via the given map to that group).
Thus $G$ is modified-Rhodes prime with respect to direct products.
But $G$ does not have a minimal normal subgroup, or even
satisfy the condition that the intersection
of any two nontrivial normal subgroups be nontrivial --
it has many pairs of nonzero $\!\Z_p\!$-subspaces
with zero intersection.
I have not examined whether $G$ also has the {\em unmodified}
Rhodes-primeness condition with respect to direct
products, and/or the modified or unmodified condition with
respect to semidirect products.

Returning to the unmodified conditions,
another question I have not thought hard about is

\begin{question}\label{Q.fin_vs_inf}
If a finite group $G$ is Rhodes-prime with respect to direct products
in the category of finite groups,
must it also have that property in the category of all groups?
{\rm(}The converse is clearly true.{\rm)}
\end{question}

Propositions~\ref{P.x|-prime} and~\ref{P.AO_simple}
show that the answer to the corresponding question for
Rhodes-primeness with respect to {\em semidirect} products
is~``yes'', and~\eqref{d.MMT_pm} shows the same for Tarski-primeness.

Some further observations:

The analog of the Krull-Schmidt theorem is not true for
finite {\em semigroups.}
To see this, first note that in the proof of
Lemma~\ref{L.null-kappa_x} above,
the case of finite $\kappa$ uses only finite semigroups, and so shows
that $\textup{Null}(\kappa)$ is not Tarski-prime among
finite semigroups.
Taking $\kappa$ such that $\kappa\!+\!1$ is a prime number,
$\textup{Null}(\kappa)$ also cannot be a nontrivial direct product.
For another example see \cite[Exercise~4, p.\,265]{MMT}.

But as we saw in \S\S~\ref{S.prep}-\ref{S.N_etc},
things become much better if we move from semigroups to monoids.
In particular, the J\'onsson-Tarski Theorem
(\cite{JT}, \cite[p.\,290]{MMT})
proves unique factorization for finite algebras
in a large class of varieties including the variety of monoids,
and hence Tarski-primeness of all nontrivial non-factoring
algebras in those varieties.
(Regarding the language used in~\cite{MMT}, see discussion
preceding and following~\eqref{d.MMT_pm} above.
That discussion concerns a generalization,
\cite[Exercise~5.15\,(10), p.\,300]{MMT}, of
the J\'onsson-Tarski Theorem to not necessarily
finite algebras whose ``centers'', in the sense used
there, are finite.)

These thoughts lead to our last question.

\begin{question}\label{Q.smgp,mnd,gp}
Understanding the two sorts of Rhodes-primeness to be defined
for semigroups and monoids as we define them
in Convention~\ref{Cv.Rh&Ta} for groups
{\rm(}indeed, they are so defined for semigroups in \cite{RS}{\rm)},
and likewise for the condition
of Tarski-primeness {\rm(}which is defined for general algebras
in~\cite{MMT}{\rm)}, the following
five implications are clear.
Are some or all of them reversible?

For any {\em finite} group $G,$
\begin{equation}\begin{minipage}[c]{35pc}\label{d.R_pm_smdir=>=>}
\hspace{2.4em}$G$ is Rhodes-prime with respect to semidirect products as
a finite {\em semigroup}\\
$\implies$ $G$ is Rhodes-prime with respect to semidirect products as
a finite {\em monoid}\\
$\implies$ $G$ is Rhodes-prime with respect to semidirect products as
a finite {\em group}.
\end{minipage}\end{equation}
\begin{equation}\begin{minipage}[c]{35pc}\label{d.R_pm_dir=>=>}
\hspace{2.4em}$G$ is Rhodes-prime with respect to direct products as
a finite {\em semigroup}\\
$\implies$ $G$ is Rhodes-prime with respect to direct products as
a finite {\em monoid}\\
$\implies$ $G$ is Rhodes-prime with respect to direct products as
a finite {\em group}.
\end{minipage}\end{equation}
\begin{equation}\begin{minipage}[c]{35pc}\label{d.T_pm=>=>}
\hspace{2.4em}$G$ is Tarski-prime as a finite {\em semigroup}\\
$\implies$ $G$ is Tarski-prime as a finite {\em monoid,}
equivalently, as a finite {\em group.}
\end{minipage}\end{equation}

Likewise, for arbitrary groups $G,$
are some or all of the corresponding implications
with ``finite'' everywhere deleted reversible?
\end{question}

One can see from the J\'onsson-Tarski Theorem mentioned above
that a finite group is Tarski-prime as a monoid
if and only if it is Tarski-prime as a group, which is
why~\eqref{d.T_pm=>=>} above shows fewer conditions
than the other two displays.
(But in the final sentence of Question~\ref{Q.smgp,mnd,gp}
the analog of~\eqref{d.T_pm=>=>} could be
expanded to a $\!3\!$-condition implication like the others.)

Incidentally, because a semigroup or monoid is not
in general isomorphic to its opposite, there are actually two
versions of the concept of {\em semidirect} product for these objects,
one based on ``action on the right'' and the other
on ``action on the left''.
But since the opposite of every semigroup
or monoid is still a semigroup or monoid, general results
about each of these constructions imply
the corresponding results about the other.
Cf.~\cite[p.\,24, first sentence of next-to-last paragraph]{RS}.

\section{An observation on Rhodes-primeness with respect to direct\\
products in arbitrary varieties of algebras}\label{S.Rprime&var}

The concept of Rhodes-primeness with respect to
direct products, as defined in Convention~\ref{Cv.Rh&Ta}(iv) above,
makes sense in any variety of algebras.
(In contrast, in a general variety there's no obvious
version of the concept of semidirect product of objects, hence
no obvious version of the concept of
Rhodes-primeness with respect to semidirect products.)
We end this note with a result on finite algebras Rhodes-prime
with respect to direct products in general varieties of algebras.

Given an object or family of objects $X$ in a variety $V,$
I will write $\textup{Var}(X)$ for the subvariety of $V$
generated by $X.$
We allow $V$ to have infinitely many operations, and/or to
have operations of infinite arities --
the finiteness of the algebras considered
will make those features unimportant.

Abandoning the convention made for semigroups at the beginning
of this note, we will understand every variety with no zeroary
operations to have an empty algebra.
In such a variety, this will be a subquotient of every object,
hence will be considered Rhodes-prime with respect to direct products.
We likewise put aside the restriction made in
Convention~\ref{Cv.Rh&Ta} of excluding $\!1\!$-element
algebras from the classes called Rhodes-prime.

Recall that an element $x$ of a lattice is called
{\em join prime} if $x\leq y\vee z$ implies
$x\leq y$ or $x\leq z.$

\begin{proposition}\label{P.Rprime&var}
Let $V$ be any variety of algebras \textup{(}in the sense
of universal algebra\textup{)} and $X$ a finite object of $V.$
Then the following conditions are equivalent:

\,\,{\rm(i)}\, $X$ is Rhodes-prime with respect to direct
products in the category of finite objects of $V.$

\,{\rm(ii)}\, If $Y_0$ and $Y_1$ are finite nonempty
objects of $V$ such that
$X\in\textup{Var}(\{Y_0,\,Y_1\}),$ then $X$ is a subquotient either
of $Y_0$ or of $Y_1.$

{\rm(iii)} $\textup{Var}(X)$ is join-prime in the join-semilattice of
subvarieties of $V$ generated by finite families
of finite algebras; and
whenever $Y$ is a finite nonempty algebra in $V$ such that
$\textup{Var}(X) \subseteq\textup{Var}(Y),$ $X$ is a subquotient of $Y.$
\end{proposition}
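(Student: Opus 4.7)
The plan is to verify (i)~$\Leftrightarrow$~(ii) and (ii)~$\Leftrightarrow$~(iii) separately, each resting on a single underlying technical fact connecting membership in a variety with the ``subquotient'' relation on finite direct products. The technical fact I will establish first is: for any finite nonempty algebras $Y_0,\ldots,Y_n$ in $V$, a finite nonempty $X$ lies in $\textup{Var}(\{Y_0,\ldots,Y_n\})$ if and only if $X$ is a subquotient of some finite product $Y_0^{m_0}\times\cdots\times Y_n^{m_n}$. The reverse direction is immediate from closure of varieties under $H$, $S$, $P$. For the forward direction, Birkhoff's $HSP$ theorem realizes $X$ as a homomorphic image of some subalgebra $A$ of a (possibly infinite) product of copies of the $Y_i$; since $X$ is finite, I pick finitely many elements of $A$ whose images generate $X$, let $A_0$ be the subalgebra they generate (still finite), and use that finitely many product coordinates suffice to separate the finitely many elements of $A_0$, embedding $A_0$ in a finite subproduct.

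For (i)~$\Rightarrow$~(ii), I apply this lemma to realize $X\in\textup{Var}(\{Y_0,Y_1\})$ as a subquotient of $Y_0^{m_0}\times Y_1^{m_1}$, then invoke Rhodes-primeness iteratively: at each step I split off one direct factor, pushing $X$ into a smaller subproduct, until $X$ is a subquotient of a single copy of $Y_0$ or of $Y_1$. For (ii)~$\Rightarrow$~(i), I note that a subquotient of $G_0\times G_1$ automatically lies in $\textup{Var}(\{G_0,G_1\})$, so (ii) gives exactly the conclusion of Rhodes-primeness with respect to direct products.

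For (ii)~$\Leftrightarrow$~(iii), the first step is to observe that for nonempty finite algebras $\textup{Var}(\{Y_0,Y_1\})=\textup{Var}(Y_0\times Y_1)$, since each $Y_i$ is the image of $Y_0\times Y_1$ under a (surjective, by nonemptiness) projection. Hence the join-semilattice appearing in (iii) is exactly $\{\textup{Var}(Y)\mid Y\text{ finite nonempty}\}$, with join $\textup{Var}(Y_0)\vee\textup{Var}(Y_1)=\textup{Var}(Y_0\times Y_1)$. So join-primeness of $\textup{Var}(X)$ in this semilattice translates to the statement ``$X\in\textup{Var}(Y_0\times Y_1)$ implies $X\in\textup{Var}(Y_i)$ for some $i$.'' Given (ii), the join-primeness clause drops out (since a subquotient of $Y_i$ lies in $\textup{Var}(Y_i)$), and its second clause follows by taking $Y_0=Y_1=Y$. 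Conversely, given (iii) and $X\in\textup{Var}(\{Y_0,Y_1\})=\textup{Var}(Y_0\times Y_1)$, join-primeness yields $X\in\textup{Var}(Y_i)$ for some $i$, and then the second clause of (iii) upgrades this to $X$ being a subquotient of $Y_i$.

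The main obstacle I anticipate is the finite-power reduction in the opening lemma: one must check carefully that the HSP-description can always be realized with a \emph{finite} product when $X$ is finite, and that the nonemptiness assumption on the $Y_i$ is used consistently (to make projections surjective and to avoid empty-product pathologies in varieties without zeroary operations). Once that lemma is in hand, the remaining work is a matter of unwinding definitions and iterating Rhodes-primeness one direct factor at a time.
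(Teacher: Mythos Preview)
Your proposal is correct and follows essentially the same route as the paper's proof. The paper argues the cycle (i)$\Rightarrow$(ii)$\Rightarrow$(iii)$\Rightarrow$(i) rather than your pair of biconditionals, but the substantive steps match: for (i)$\Rightarrow$(ii) the paper realizes $X$ as a quotient of the free algebra on $|X|$ generators in $\textup{Var}(\{Y_0,Y_1\})$, which it embeds directly in $Y_0^{|Y_0|^n}\times Y_1^{|Y_1|^n}$ --- this is the same finite-product reduction you obtain via HSP plus coordinate pruning, just packaged more directly --- and then iterates Rhodes-primeness exactly as you do; for the remaining implications both proofs identify the join-semilattice with $\{\textup{Var}(Y):Y\text{ finite}\}$, extract the second clause of (iii) by setting $Y_0=Y_1$, and close the loop by combining join-primeness with that clause. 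The only point you leave slightly implicit is the empty-factor edge case in (ii)$\Rightarrow$(i) (when one $G_i$ is empty, so is the product, hence so is $X$), which the paper handles explicitly in its (iii)$\Rightarrow$(i); this is trivial to patch.
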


\begin{proof}

We shall prove (i)$\implies$(ii)$\implies$(iii)$\implies$(i).

Assuming (i), suppose we are given $Y_0,$ $Y_1$ as in
the hypothesis of~(ii).
Then taking $n$ such that $X$ is generated by $n$
elements, the assumption $X\in\textup{Var}(\{Y_0,\,Y_1\})$ makes
$X$ a homomorphic image of the free algebra on $n$ generators in
$\textup{Var}(\{Y_0,\,Y_1\}),$ which can be
constructed as a subalgebra of
$Y_0^{\textup{card}(Y_0)^n}\!\!\!\times Y_1^{\textup{card}(Y_1)^n}\!\!.$
This makes $X$ a
subquotient of a finite direct product of copies of $Y_0$ and $Y_1.$
Applying the Rhodes-primeness condition~(i) inductively, we
conclude that that $X$ is a subquotient of $Y_0$ or of $Y_1,$
proving~(ii).

Now assume~(ii).
Since any subvariety of $V$ generated by finitely many finite
algebras is generated by one finite algebra (the direct product of the
nonempty algebras in the family), the first assertion of~(iii) is
equivalent to saying
that if $\textup{Var}(X)\subseteq\textup{Var}(\{Y_0,\,Y_1\}),$
then $\textup{Var}(X)\subseteq\textup{Var}(Y_0)$
or $\textup{Var}(X)\subseteq\textup{Var}(Y_1).$
This condition is trivial if one of $Y_0,$ $Y_1$ is empty,
while assuming them nonempty, it implied by the subquotient
conclusion of~(ii).
The second assertion of~(iii) is obtained from~(ii) by
taking $Y_0=Y_1=Y.$

Finally, assuming~(iii), suppose $X$ is a subquotient of
$Y_0\times Y_1.$
Then in particular, it belongs to the variety
generated by $Y_0$ and $Y_1,$
so by the first condition of~(iii), $X$ belongs
to the variety generated by one of these, which we will call $Y.$
Then assuming $Y$ nonempty, the second condition of (iii) shows
that $X$ is a subquotient of $Y,$ establishing~(i).
On the other hand, if $Y$ is empty, then so is~$X,$
and since the empty algebra (if it exists in $V)$
is Rhodes-prime with respect to direct products, we again get~(i).
\end{proof}

We note that neither of the two conditions of (iii) implies the other.
Indeed, let $V$ be the variety of abelian groups.
Note that every subvariety of $V$ is determined by
an identity $x^n=e$ for a unique nonnegative integer~$n.$
It is easy to see that for any prime $p,$ the group $X=\Z_p\times\Z_p$
satisfies the first part of~(iii) but (taking $Y=\Z_p),$
not the second, while for any two distinct primes $p\neq q,$ the group
$X=\Z_p\times\Z_q$ satisfies the second but not the first.

In that same condition~(iii), can the inclusion
$\textup{Var}(X) \subseteq\textup{Var}(Y)$ be replaced by
$\textup{Var}(X)=\textup{Var}(Y)$?
Here is a version of an example provided by the referee
showing that it cannot.
Let $G$ be a nontrivial finite group, $V$ the variety
of $\!G\!$-sets, and $X$ a $\!2\!$-element set on which
$G$ acts trivially.
Thus, $X$ is a subquotient of a $\!G\!$-set $Y$ if and
only if $Y$ has more than one orbit.
In particular, if $Y$ is a free $\!G\!$-set on one generator,
then $X$ is not a subquotient of $Y,$ but is
a subquotient of $Y\times Y;$ so $X$ is not
Rhodes-prime with respect to direct products.
But turning to condition~(iii), we see that
$\textup{Var}(X)$ is the variety of $\!G\!$-sets
on which $G$ acts trivially, which is a minimal
nontrivial element of the lattice of subvarieties
of $V,$ hence join-prime; and we easily see that the
``whenever'' condition of~(iii) with $\subseteq$ replaced by~$=$ also
holds for~$X;$ so that modified version of~(iii) is not
equivalent to~(i).

\section{Acknowledgements}\label{S.ackn}
I am grateful to Jonathan Farley for his group-email pointing
out that the result of~\cite{YCor} answers one of the questions
listed at~\cite[p.\,263]{MMT}, which drew my attention to that list of
questions, in particular the one now answered
in \S\ref{S.Main} above; to Ralph Freese
and Walter Taylor for information about what is known about
Tarski-primeness; to Benjamin Steinberg,
Stuart Margolis
and Edmond Lee for information about what is known
about the two Rhodes-primeness conditions; to Alexander Olshanskiy
for Proposition~\ref{P.AO_simple}, Examples~\ref{E.p^5}
and~\ref{E.Q8,D4}, and some pointers on terminology, and
to the referee for further information about known results
and terminology, and some useful examples and suggestions.

\end{document}